\crefname{hypothesis}{Hypothesis}{Hypotheses}
\DeclareFontFamily{U}{matha}{\hyphenchar\font45}
\DeclareFontShape{U}{matha}{m}{n}{
<-6> matha5 <6-7> matha6 <7-8> matha7
<8-9> matha8 <9-10> matha9
<10-12> matha10 <12-> matha12
}{}
\DeclareSymbolFont{matha}{U}{matha}{m}{n}
\DeclareFontFamily{U}{mathx}{\hyphenchar\font45}
\DeclareFontShape{U}{mathx}{m}{n}{
<-6> mathx5 <6-7> mathx6 <7-8> mathx7
<8-9> mathx8 <9-10> mathx9
<10-12> mathx10 <12-> mathx12
}{}
\DeclareSymbolFont{mathx}{U}{mathx}{m}{n}
\DeclareMathDelimiter{\vvvert} {0}{matha}{"7E}{mathx}{"17}%
\DeclarePairedDelimiterX{\normiii}[1]
{\vvvert}
{\vvvert}
{\ifblank{#1}{\:\cdot\:}{#1}}
\newcommand{\A}{\mathbb{A}}
\newcommand{\I}{\mathbb{I}}
\newcommand{\W}{\mathcal{W}}
\newcommand{\LL}{\mathcal{L}}
\newcommand{\R}{\mathbb{R}}
\newcommand{\half}{\frac{1}{2}}
\newcommand{\norm}[1]{\left\lVert#1\right\rVert}
\newcommand{\abs}[1]{\left\lvert#1\right\rvert}
\newtheorem{thm}{Theorem} 
\newtheorem{defn}{Definition}
\newtheorem{propn}{Proposition}
\newcommand{\cc}{\mathbf{c}}
\newcommand{\ee}{\mathbf{e}}
\newcommand{\xx}{\mathbf{x}}
\newcommand{\yy}{\mathbf{y}}
\newcommand{\ff}{\mathbf{f}}
\newcommand{\bb}{\mathbf{b}}
\newcommand{\uu}{\mathbf{u}}
\newcommand{\vv}{\mathbf{v}}
\newcommand{\ww}{\mathbf{w}}
\newcommand{\zz}{\mathbf{z}}
\newcommand{\QQ}{\mathbf{Q}}
\title{Quasistatic Evolution with Unstable Forces\thanks{\textbf{Funding:} This material is based upon work supported by the U. S. Army Research Laboratory and the U. S. Army Research Office under Contract/Grant Number W911NF-19-1-0245.}
}
\author{Debdeep Bhattacharya\thanks{
 Department of Mathematics,
Louisiana State University,
Baton Rouge, LA 70803,
USA (\email{debdeepbh@lsu.edu}).}
\and Robert P. Lipton\thanks{
 Department of Mathematics,
 LSU Center of Computation \& Technology,
Louisiana State University,
Baton Rouge, LA 70803, USA
  (\email{lipton@lsu.edu}).}
}
\begin{document}

\maketitle

\begin{abstract}
We consider load controlled quasistatic evolution. Well posedness results for the nonlocal continuum model related to peridynamics are established.  We show local existence and uniqueness of quasistatic evolution for load paths originating at stable critical points. These points can be associated with local energy minima
among the convex set of deformations belonging to the strength domain of the material. The evolution of the displacements however is not constrained to lie inside the strength domain of the material. The load-controlled evolution is shown to exhibit energy balance. 
\end{abstract}

\begin{keywords}
    continuum mechanics, peridynamics, damage, quasistatic evolution, fixed-point, energy balance
\end{keywords}

\begin{MSCcodes}
    74A70, 74A20, 74A45
\end{MSCcodes}

\section{Introduction}%
\label{sec:introduction}

The model studied here is a nonlocal continuum model where the length scale of force interaction between points is taken to be at least an order of magnitude smaller than the characteristic length of the domain. 
We pose a nonlinear and nonlocal field theory of peridynamic (PD) type \cite{SILLING2000175}. The unknown is the displacement field $\uu$ at a point $\xx$ in the body at time $t$. 
For PD models the force interaction occurs between a point $\xx$ and another point $\yy$ when $\yy$ lies within a sphere $H_\epsilon(\xx)$ of radius $\epsilon$ centered at $\xx$. The force between $\xx$ and $\yy$, is determined by the displacement at each point through a constitutive law. The net force on $\xx$ is the force averaged over all $\yy$ in the sphere. The radius $\epsilon$ is often referred to as the horizon. The net force is referred to as the PD force of the body acting on $\xx$. The constitutive law used here is of cohesive type; the force between two points initially increases with strain until a maximum force is reached and then the force decreases to zero with a continued increase in strain.

The objective of PD field theory is to account for elastic interaction where the material is intact as well as the emergence and propagation of failure zones characterized by vanishing force. PD models are inherently multiscale, coupling fracture caused by breaking bonds at the atomic scale with elastic deformation at the macroscopic scale.
Dynamic simulations for cohesive PD show that failure zones are naturally localized by the model and appear as thin and crack like \cite{lipton2019complex,jhalipton2020}.  Both localization and emergent behavior is the hallmark of simulations using the PD formulation introduced in \cite{SILLING2000175,silling2007peridynamic}, see for example \cite{silling2005meshfree,bobaru2010peridynamic,oterkus2014fully,Kilic2009,silling2017modeling,madenci2017ordinary,parksyueyoutrask} where different PD models are developed and applied. There is a growing mathematical theory supporting well posedness of the nonlocal fracture modeling and numerical simulations for different PD fracture  models \cite{emmrichpuhst2016,dutian2018,lipton2018free,jha2018numerical,parksyueyoutrask}. Cohesive PD models like the one considered here are seen to recover classic Griffith fracture energy from the PD energy in the limit $\epsilon\rightarrow 0$ through $\Gamma$ convergence,
\cite{lipton2014horizonlimit,lipton2016cohesive,lipton2019complex}.  Convergence of PD fields adjacent to crack like defects to elastic fields with zero normal traction on classic cracks as well as the classic kinetic relations for crack growth (see \cite{Freund,ravi2004dynamic,anderson2017fracture}) are recovered in the $\epsilon=0$ limit \cite{liptonjha2021,{jhalipton2020}}. 

In the absence of inertia one considers quasistatic or rate independent evolution. Nonlocal formulations of rate independent linearized plastic evolution is formulated and solved \cite{kruvzik2018quasistatic}. There it is shown that the nonlocal peridynamic model $\Gamma$ converges to classic local elastoplasticity as the interaction range goes to zero. Nonlocal equilibrium problems for linear elasticity are  shown to $\Gamma$ converge to classic elastic boundary value problems \cite{mengesha2015variational}.
Nonlocal multiscale peridynamic models have been shown to rigorously to recover classic strongly coupled local theories of elasticity with highly oscillatory coefficients \cite{scott2020asymptotic}. Memory effects are recovered by two scale limits of highly oscillatory linear nonlocal models using two-scale homogenization \cite{alali2012multiscale,du2016multiscale}.


This article investigates the quasi-static regime using the constitutive model of cohesive PD. We consider the zero inertia limit 
and focus on {the rate independent} evolution governed by the PD equations in the absence of acceleration. 
The approach to evolution taken here is a departure from the quasistatic evolution of global energy mininimizers. 
{
Given a body $\Omega\in\R^d$, $d=2,\,3$ we assume at $t=0$ and for $\xx\in\Omega$ that there exists a displacement $\uu_0=\uu(\xx,0)$ and body force $\bb(\xx,0)$ for which we have force balance
\begin{equation}\label{eq:pdQuasi0}
\begin{aligned}
   \mathcal{L}[\uu_0](\xx) = \bb(\xx,0),
\end{aligned}
\end{equation}
where $\mathcal{L}[\uu_0](\xx)$ is the force at $\xx$ due to the deformation $\uu_0$ and given by \eqref{eq: force}.
The quasistatic evolution $\uu(t)=\uu(\xx,t): t\in  [0,T]$ is given by
\begin{equation}\label{eq:pdQuasi1}
\begin{aligned}
   \mathcal{L}[\uu(t)](\xx) = \bb(\xx,t).
\end{aligned}
\end{equation}
for a prescribed load path $\bb(\xx,t): t\in [0,T] \to \R^2$. Here $t$ appears as a load parameter and the load path is independent of parameterization. 
We show existence of an evolution in a neighborhood of $(\uu_0,\bb_0)$ provided the inverse of the Fr\'echet  derivative of  $\LL[\uu(t)]$ exists at $\uu_0$, see theorem \ref{thm:exitence-of-perturbation}. When the load $\bb(t)$ is smooth and the Fr\'echet derivative of $\LL[\uu(t)]$ is invertible for $t\in [0,T]$ then the quasistatic evolution satisfies energy balance, see theorem \ref{thm:Energy-load} of section \ref{sec:main results}.}

{A displacement $\uu(\xx,t)$ is said to be in the strength domain if its strain 
$$S(\yy,\xx)=\frac{\uu(\yy)-\uu(\xx)}{|\yy-\xx|}\cdot\frac{\yy-\xx}{|\yy-\xx|},$$
increases with increasing force for every $\yy$ in the neighborhood of every point in the body, see Definition \ref{strength defn}. If there is a set of points for which force begins to decrease inside their neighborhoods then the deformation lies outside the strength domain. The collection of such points are called softening zones. The key feature is that  the force $\LL[\uu(t)](\xx)$ remains  defined for all points of the body. Here material damage is represented by the softening zone and its propagation is part of the quasistatic evolution.}

{Next we consider the case when $\bb_0$ is such that the solution $\uu_0$ of \eqref{eq:pdQuasi0} lies inside the strength domain. For this case we introduce a stability tensor field defined at every point $\xx$ in the body, see Definition \ref{stabilitytensorload} of section \ref{sec:main results}. We discover that the stability tensor is positive definite when the domain satisfies an interior cone condition and the deformation field lies inside the strength domain of the material, see theorem \ref{thm:stabinvI}
of section \ref{sec:main results}. The interior cone condition automatically excludes sharp domains see figure \ref{fig:int-cone}.
For this case we can show that the Fr\'echet derivative of $\LL[\uu_0]$ is invertible and conclude that a quasistatic evolution exists in an neighborhood of $(\bb_0,\uu_0$), see the discussion below theorem \ref{thm:stabinvI}.}


We relate the existence of a local evolution to the existence of a local minimizer of a PD energy. We show that a quasi-static PD  evolution exists within a neighborhood of a local energy minimizer. Here the minimizing displacement is a local minimizer among fields belonging to the strength domain of the material, see Theorem \ref{thm:Exist-hard loading} of section \ref{sec:main results}. {If instead the displacement $\uu_0$ does not lie in the strength domain we present more general sufficient conditions for invertability given in Theorem \ref{esistence of inverse in strength domain}. Part of the sufficient conditions require that all eigenvalues of $\mathbb{A}[\uu]$ lie outside an open interval containing $0$.}
{We present a necessary condition of invertability of the Fr\'echet derivative of $\LL[\uu_0]$ in terms of the stability tensor that shows that $\mathbb{A}[\uu]$ can not have a zero eigenvalue on a subset of finite measure on $\Omega$, see Theorem \ref{thm:ness} of section \ref{sec:main results}. }  The stability tensor is defined here for the quasistatic case but agrees in form with the stability tensor introduced for dynamic fracture in \cite{sillingweknerascaribobaru} and in  \cite{lipton2014horizonlimit,lipton2019complex}. This tensor appears again for elasto-dynamic  problems  and in energy minimization for equilibrium problems in \cite{DuGunLehZho,MengeshaDuNonlocal14} and elasto-static problems with sign changing kernel \cite{MengeshaDu}. 
{Much of the theory developed here provides the foundation for the quasistatic fracture theory developed and implemented in the sequel \cite{BhattacharyaLiptonDiehl}.}

 The PD model is described in section \ref{sec:background} and the main results are provided in section \ref{sec:main results}. 
 The proofs of all theorems are provided in sections \ref{sec:existence theory for soft} through \ref{sec:necessary}. The article concludes with a summary of results.

\section{Background and problem formulation}
\label{sec:background}
In this paper both two and three dimensional nonlocal formulations are considered.
For dynamics the deformation field $\uu(\xx, t)$ of a material point $\xx$ in the domain $\Omega \subset \R^d,\,\,d=2,3$, at time $t$ satisfies the momentum-balance equation
\begin{align}
\label{balance}
    \rho \ddot{\uu}(\xx, t) =  \int\limits_{H_\epsilon(\xx) \cap \Omega}^{} \ff(\yy, \xx, \uu, t) d\yy + \bb(\xx, t),\ \xx \in \Omega 
\end{align}	
where $\rho$ is the material density, $H_\epsilon(\xx)$ is a sphere centered at the point $\xx$ with radius $\epsilon$ and $\bb$ is the body force density. Here, $H_\epsilon(\xx)\cap \Omega$ is the set of neighboring points $\yy$ in $\Omega$ that can interact with $\xx$. The parameter $\epsilon$ is called the horizon. The force interaction between $\xx$ and $\yy$ is mediated by 
$\ff( \yy, \xx, \uu,t)$ that denotes the force density exerted by a point $\yy \in H_\epsilon(\xx)$  on $\xx$ and is taken to be a function of internal displacement at time $t$. 

When the effect of inertia can be ignored \eqref{balance} reduces to the quasi-static formulation given by
\begin{align}
\label{eq:pdQuasi}
    \int\limits_{H_\epsilon(\xx) \cap \Omega}^{} \ff (\yy, \xx, \uu) d\yy + \bb(\xx,t) = 0.
\end{align}

We introduce the 
nonlocal strain $S(\yy, \xx, \uu)$ between the point $\xx$ and any point $\yy \in H_\epsilon(\xx)$  given by
\begin{align}\label{strain}
    S(\yy, \xx, \uu) = \frac{\uu(\yy) - \uu(\xx)}{\abs{\yy - \xx}} \cdot \ee_{\yy - \xx},
\end{align}	
where $\ee_{\yy- \xx}$ is the unit vector given by
\begin{align*}
\ee_{\yy - \xx}  = \frac{\yy - \xx}{\abs{\yy - \xx}}.
\end{align*}	
Force is related to strain using the constitutive relation given by the cohesive force law  \cite{lipton2014horizonlimit,lipton2016cohesive}. Under this law the force is linear for small strains and for larger strains the force begins to soften and then approaches zero
after reaching a critical strain. 
The nonlocal force density $\ff$ is given in terms of the nonlocal potential $\W(S)$ by
\begin{align}\label{contsit1}
    \ff(\yy, \xx, \uu) = 2 \partial_S \W(S(\yy, \xx, \uu)) \ee_{\yy - \xx},
\end{align}	
where
\begin{align}\label{contsit2}
    \W(S(\yy, \xx, \uu)) = \frac{J^\epsilon(\abs{\yy - \xx})}{\epsilon^{d+1}\omega_d \abs{\yy - \xx}}  g(\sqrt{ \abs{\yy - \xx}} S(\yy, \xx, \uu)).
\end{align}	
Here, $J^\epsilon(r) = J(\frac{r}{\epsilon})$, where $J$ is a non-negative bounded function supported on $[0,1]$. $J$ is called the \textit{influence function} as it determines the influence of the bond force of peridynamic neighbors $\yy$ on the center $\xx$ of $H_\epsilon(\xx)$. The volume of unit ball in $\R^d$ is denoted by $\omega_d$. 
As figure \ref{ConvexConcavea} illustrates we assume that $g(r)$ and the derivatives $g'(r)$, $g''(r)$, and $g'''(r)$ are bounded for $-\infty < r<\infty$. It is required is that $g(0)=0$ and $g(r)>0$ otherwise, $g(r)$ together with its first three derivatives must be bounded, and that $g$ be  convex in the interval $r^e<0<r^c$ and concave outside this interval with finite limits $\lim_{r\rightarrow-\infty}{g(r)}=C^-$ and $\lim_{r\rightarrow\infty}{g(r)}=C^+$. Additionally $\max\{|g''(r|)\}=g''(0)$.
The quasistatic peridynamic equation \eqref{eq:pdQuasi} is expressed by
\begin{align*}
    \LL[\uu](\xx,t) = \bb(\xx,t), \ \xx \in \Omega,
\end{align*}	
where the integral operator $\LL$ is defined as
\begin{align}
      \label{eq: force}
    \LL[\uu](\xx,t) = -\int\limits_{H_\epsilon(\xx) \cap \Omega}^{} {2}\frac{J^\epsilon(\abs{\yy - \xx} )}{\epsilon^{d+1} \omega_d \sqrt{\abs{\yy - \xx}}} g'\left(\sqrt{ \abs{\yy - \xx}} S(\yy, \xx, \uu)\right) \ee_{\yy - \xx} d\yy.
\end{align}	

For this model the strength domain of the material is simple and can be described in terms of the strain $S(\yy,\xx,\uu)$.
\begin{defn}{Strength Domain.}\label{strength defn} 
For $\xx\in\Omega$ and $\yy\in \Omega\cap H_\epsilon(\xx)$ the strength domain of the material is given by all displacements with strain inside the interval
\begin{equation}\label{strength1}
{r^e}<\sqrt{|\yy-\xx|}S(\yy,\xx,\uu)<r^c,
\end{equation}
or
\begin{equation}\label{strength}
\frac{r^e}{\sqrt{|\yy-\xx|}}<S(\yy,\xx,\uu)<\frac{r^c}{\sqrt{|\yy-\xx|}}.
\end{equation}
\end{defn}
This is the set of strains where the magnitude of force increases with increasing strain. Material failure occurs for strains outside this interval where the force becomes unstable. Moreover $g''(\sqrt{|\yy-\xx|}S(\yy,\xx,\uu))>0$ in the strength domain. The strength domain is a convex set. A displacement is said to lie strictly inside the strength domain if $\sqrt{|\yy-\xx|}S(\yy,\xx,\uu)$ lies within a closed interval inside $(r^e,r^c)$.


{Any Lipschitz continuous function with modulus of continuity less than $\omega=\min\{|r^e|,|r^c|\}$ lies within the strength domain provided the horizon for the material is less than one, i.e., $\epsilon<1$
}

\begin{figure}
    \centering
\includegraphics[width=0.8\linewidth]{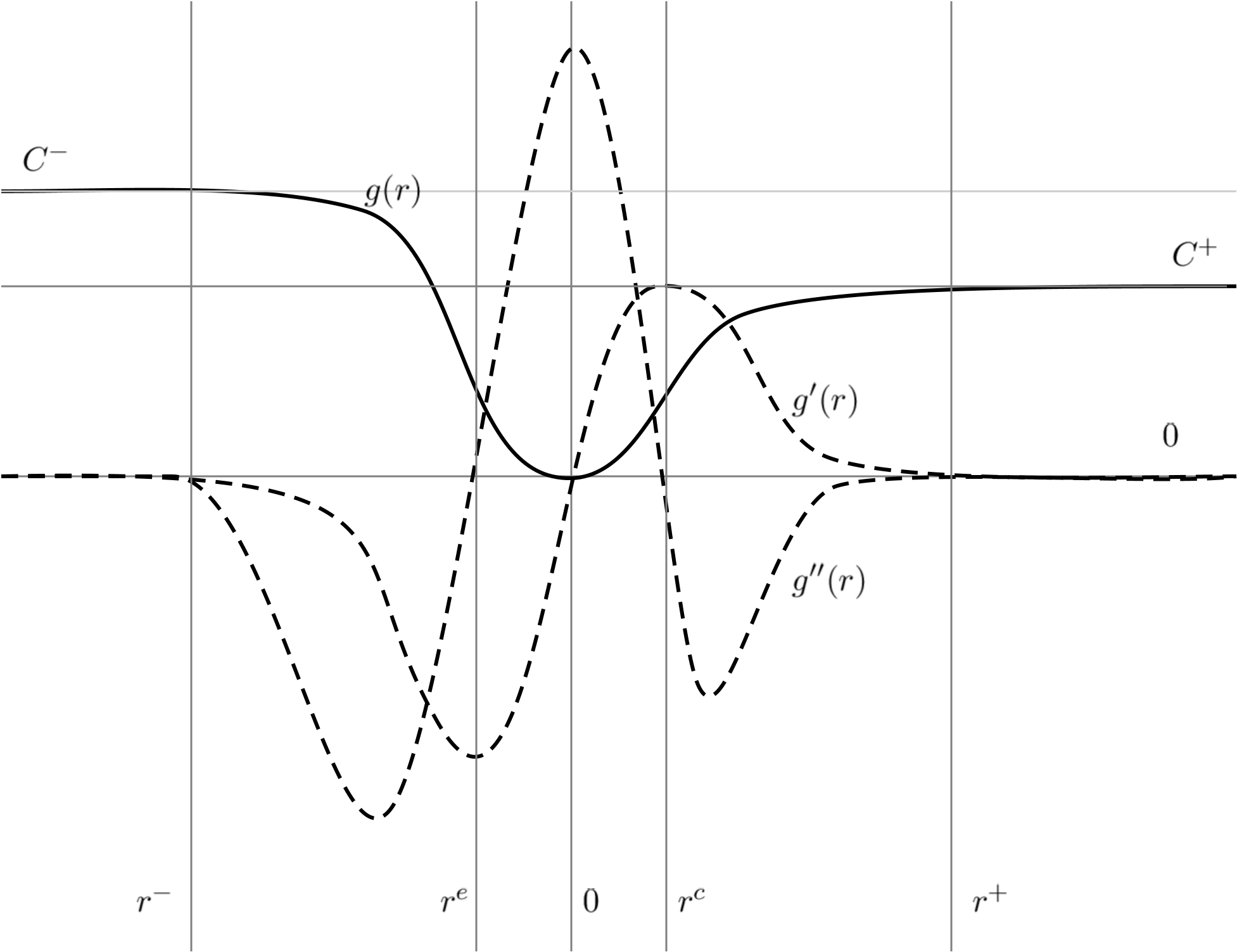}
		  \caption{The double well potential function $g(r)$ and derivatives $g'(r)$ and $g''(r)$  for tensile force. Here $C^+$ and $C^-$ are the asymptotic values of $g$. The derivative of the force potential goes smoothly to zero at $r^+$ and $r^-$.}
		  \label{ConvexConcavea}
\end{figure}

The quasistatic evolution problem is now described. The load path is given by a prescribed body force density $\bb(\xx,t)$ parameterized by $t$, with $0\leq t\leq T$ and written $\bb(t)$. The associated displacement is written $\uu(t):=\uu(\xx,t)$. 
The application of  $\bb$  in the absence of prescribed boundary displacement is referred to as load control \cite{anderson2017fracture}. We say that the displacement $\uu(t)$ satisfies the quasi-static evolution problem for load control with prescribed load path $\bb(t)$, $t \in [0,T]$ if it satisfies
\begin{equation}
    \label{eq:equlib2}
    \LL[\uu(t)] = \bb(t),
\end{equation}
for $0\leq t \leq T$. 

We conclude this section noting that in the peridynamic taxonomy our cohesive model is classified as a bond-based or ordinary state based peridynmic material model outlined in \cite{silling2007peridynamic}. To see this write
\begin{align}\label{eq:pdforce}
\mathcal{L}(\uu)=\int_{\Omega\cup\mathcal{H}_{\epsilon}(\xx)} \left({\mathbf{T}}(\xx)(\yy-\xx)-\mathbf{T}(\yy)(\xx-\yy)\right)\;d\yy,
\end{align}
where
\begin{align}\label{eq:pdstate}
\mathbf{T}(\yy)(\yy-\xx)=\partial_S\mathcal{W}^\epsilon(S(\yy,\xx,\uu(t)))\ee_{\yy-\xx},  \mathbf{T}(\xx)(\xx-\yy)=\partial_S\mathcal{W}^\epsilon(S(\xx,\yy,\uu(t)))\ee_{\xx-\yy}.
\end{align}

\section{Existence and energy balance for load control}
\label{sec:main results}
The section contains the main results and describes the existence and uniqueness of quasistatic evolution within a neighborhood of a prescribed initial deformation of the material. 
In what follows fixed point methods are applied to find  solutions to the quasistatic evolution.
Solutions are elements of a subspace of the well known Lebesgue space $L^\infty(\Omega;\mathbb{R}^d)$ defined by all essentially bounded, measurable displacements with norm
\begin{equation}
    \label{eq:lfinity}
    \Vert \uu(t)\Vert_\infty:={\rm esssup}_{\xx\in \Omega}|\uu(\xx,t)|.
\end{equation}
The $L^2(\Omega;\mathbb{R}^d)$ norm is given by
\begin{equation}
    \label{eq:l2}
    \Vert \uu(t)\Vert_2:=\left(\int_\Omega\,|\uu(\xx,t)|^2\,d\xx\right)^{1/2}.
\end{equation}
We remark at the outset that all positive constants that are independent of $\uu$ are denoted either by $C$ or $K$ unless explicitly stated otherwise.

In what follows we denote a ball of radius $R$ centered at an element $\hat\uu$ of \\ $L^\infty(\Omega;\mathbb{R}^d)$ by,
\begin{align}
\label{ballR}
    B(\hat\uu,R) = \left\{ \uu : \norm{\uu  - \hat\uu}_\infty  \le R \right\}.
\end{align}

The derivative of $\LL$ at $\uu$ is denoted by $D(\LL)[\uu]$ and is a linear operator on $L^\infty(\Omega;\mathbb{R}^d)$. 
The operator norm of $D(\LL)[\uu]$ is written $\normiii{D(\LL)[\uu]}$ and defined by
\begin{align}
    \label{operatornorm} 
    \normiii{D(\LL)[\uu]}=\sup_{\Delta\uu\in L^\infty(\Omega;\mathbb{R}^d)}\frac{\Vert D(\LL)[\uu]\Delta\uu\Vert_\infty}{\Vert\Delta\uu \Vert_\infty}.
\end{align}
From Proposition \ref{propn:deriv} of section \ref{sec:existence theory for soft} the derivative is understood as a bounded linear map on $L^\infty(\Omega;\mathbb{R}^d)$.  For this case it is shown that there is a fixed positive constant $C>0$ independent of $\epsilon$ and $\Delta\uu$ in $L^\infty(\Omega;\mathbb{R}^d)$ such that
\begin{equation}\label{bdoperator}
\Vert D(\LL)[\uu]\Delta\uu\Vert_{\infty}\leq \frac{C}{\epsilon^2} \Vert \Delta \uu\Vert_{L^\infty(\Omega;\mathbb{R}^d)},
\end{equation}

The inverse map  when it exists on $\mathcal{V}$ is written $D(\LL)[\hat{\uu}]^{-1}$ and 
\begin{align}
    \label{operatornorminv}
    \Vert D(\LL)[\uu]^{-1}\Delta\uu\Vert_\infty\geq \frac{\epsilon^2}{C}\Vert\Delta\uu\Vert_\infty.
\end{align}
We define 
\begin{align}
    \label{operatornorminv2} 
    \normiii{D(\LL)[\uu]^{-1}}=\sup_{\Delta\uu\in \mathcal{V}}\frac{\Vert D(\LL)[\uu]^{-1}\Delta\uu\Vert_\infty}{\Vert\Delta\uu \Vert_\infty}.
\end{align}
{
In this treatment we consider a very general class of domains $\Omega$.  These are the domains that satisfy the interior cone condition \cite{adams2003sobolev}. }
{
\begin{defn}
\label{D2}
The interior cone condition for $\Omega$ states that there exists a positive constant angle $\theta>0$ such that any $\xx\in\Omega$ contains a spherical cone $C_{\lambda,\theta}(\xx,\ee_{\xx})$ with its apex at $\xx$, radius $\lambda$ aperture angle $2\theta$
bisected by an axis in the direction of a unit vector $\ee_{\xx}$.
\end{defn}
Such domains can not have external cusps, see \Cref{fig:int-cone}. Convex domains as well as non convex domains given by notched specimens satisfy the interior cone condition. 
In this treatment all domains $\Omega$ are assumed to satisfy the interior cone condition. }
\begin{figure}
    \centering
\includesvg[width=0.5\linewidth]{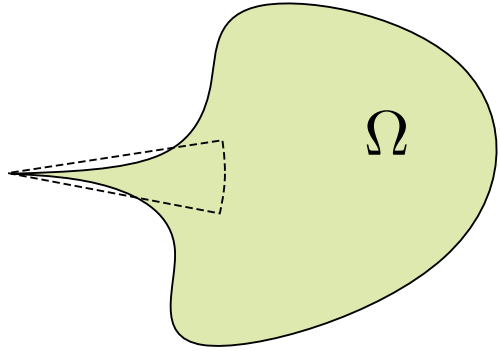}
		  \caption{An example of a domain $\Omega$ that does not satisfy the interior cone condition.}
		  \label{fig:int-cone}
\end{figure}

We now identify the subspace of $L^\infty(\Omega;\mathbb{R}^d)$ in which we find solutions to the load control problems.
First let $\Pi$ denote the space of rigid motions,  i.e.,
\begin{align}
    \label{eq:def-rigid}
    \Pi = \{ \QQ\xx + \cc : \QQ^T = -\QQ \in \R^{d \times d}, \cc \in \R^d  \}.
\end{align}	
Direct use Lemma 2 of \cite{DuGunLehZho}  shows that the rigid rotations comprise the null space of the strain operator:
\begin{propn}\label{kernelstrain}
    $S(\yy,\xx,\uu)=0$ for all $\xx \in \Omega$ and $\yy \in H_\epsilon(\xx)$ if and only if $\uu\in \Pi$.
\end{propn}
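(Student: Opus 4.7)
The plan is to establish the two directions of the equivalence separately: the ``if'' direction is a direct computation, while the ``only if'' direction reduces to an already established nonlocal rigidity lemma from \cite{DuGunLehZho}.

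For the easy direction, suppose $\uu(\xx) = \QQ\xx + \cc$ with $\QQ^T = -\QQ$. Then $\uu(\yy) - \uu(\xx) = \QQ(\yy - \xx)$, so substituting into the definition of the strain yields
\begin{equation*}
S(\yy,\xx,\uu) = \frac{\QQ(\yy-\xx) \cdot (\yy-\xx)}{|\yy-\xx|^2}.
\end{equation*}
Skew-symmetry of $\QQ$ gives $\QQ \vv \cdot \vv = 0$ for every $\vv \in \R^d$, so $S(\yy,\xx,\uu) = 0$ for all $\xx \in \Omega$ and $\yy \in H_\epsilon(\xx)$.

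For the converse, assume $S(\yy,\xx,\uu) = 0$ for all $\xx \in \Omega$ and $\yy \in H_\epsilon(\xx)$. Multiplying by $|\yy-\xx|$ shows that this is equivalent to the pointwise relation $(\uu(\yy) - \uu(\xx)) \cdot (\yy - \xx) = 0$ for all such pairs. This is precisely the nonlocal rigidity condition analyzed in Lemma~2 of \cite{DuGunLehZho}, whose statement identifies the kernel of this bond projection operator, acting on displacements defined on a domain with positive measure, as exactly the space $\Pi$ of infinitesimal rigid motions. Invoking that lemma concludes the argument.

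The main obstacle, which the cited lemma handles, is the reverse implication: the constraint that a \emph{finite} difference $\uu(\yy)-\uu(\xx)$ is perpendicular to $\yy-\xx$ for bonds of length up to $\epsilon$ must be integrated into the global statement that $\uu$ is affine with skew-symmetric linear part. In the local limit this is the content of Korn's inequality applied to the case of zero symmetrized gradient; in the nonlocal setting, one typically argues by first deducing from the open set of admissible $\yy$ around each $\xx$ that $\uu$ has locally the form of a rigid motion, then using connectedness of $\Omega$ (together with the interior cone condition implicitly in force) to patch the local rigid motions into a single global one. Since the paper explicitly defers to \cite{DuGunLehZho}, the proof is reduced to the brief citation above.
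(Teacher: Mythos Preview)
Your proposal is correct and matches the paper's approach: the paper simply states that the proposition follows by ``direct use [of] Lemma 2 of \cite{DuGunLehZho}'' without any further argument, and you do the same, supplementing it with the trivial verification of the ``if'' direction. The only difference is that you have written out more detail than the paper provides.
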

From this proposition it follows that $\LL(\ww)=0$ if $\ww\in\Pi$.

Denote the closure of $\mathcal{V}$ in  $L^2(\Omega;\mathbb{R}^d)$ by $\overline{\mathcal{V}}^2$. The function space $\mathcal{V}$ used  for quasistatic evolutions under load control is given by;
\begin{defn}
\label{H}
The space $\mathcal{V}\subset L^\infty(\Omega;\mathbb{R}^d)$ is defined by
\begin{align}\label{vzero}
\mathcal{V}=\{\uu\in L^\infty(\Omega;\mathbb{R}^d);\,\, \int_\Omega\uu\cdot\ww\,d\xx=0, \hbox{ for all $\ww\in\Pi$}\}.
\end{align}
\end{defn}
This space is closed in the $L^\infty(\Omega;\mathbb{R}^d)$ norm and it follows that $\overline{\mathcal{V}}^2\cap \Pi=\{\boldsymbol{0}\}$.

\begin{lemma}\label{luinv}
For $\uu\in \mathcal{V}$ one has $\LL[\uu]\in\mathcal{V}$ and is a bounded operator. 
\end{lemma}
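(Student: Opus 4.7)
The plan is to verify the two conditions defining $\mathcal{V}$: namely (i) that $\LL[\uu]$ is in $L^\infty(\Omega,\mathbb{R}^d)$, and (ii) that $\LL[\uu]$ is $L^2$-orthogonal to every element of the rigid motion space $\Pi$. Part (i) is essentially routine: since $g'$ is uniformly bounded by hypothesis (the derivatives of $g$ are bounded on $\mathbb{R}$), $J$ is bounded with support in $[0,1]$, and $|\ee_{\yy-\xx}|=1$, the integrand in \eqref{eq: force} is majorized pointwise by a multiple of $J^\epsilon(|\yy-\xx|)/\sqrt{|\yy-\xx|}$. Integrating this over $H_\epsilon(\xx)\cap\Omega$ using spherical coordinates gives a finite bound uniform in $\xx\in\Omega$ (of order $\epsilon^{d-1/2}/\epsilon^{d+1}$, finite for each fixed $\epsilon$), so $\LL[\uu]\in L^\infty(\Omega;\mathbb{R}^d)$.

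Part (ii) is the content of the lemma, and it is where the bond-based structure of $\LL$ enters. Fix $\ww=\QQ\xx+\cc\in\Pi$ with $\QQ^T=-\QQ$. Using \eqref{eq:pdforce}--\eqref{eq:pdstate}, or equivalently \eqref{eq: force}, together with Fubini's theorem over the symmetric set $\{(\xx,\yy)\in\Omega\times\Omega:|\yy-\xx|<\epsilon\}$, write
\begin{align*}
\int_\Omega \LL[\uu](\xx)\cdot\ww(\xx)\,d\xx
=\int_\Omega\int_{H_\epsilon(\xx)\cap\Omega} \ff(\yy,\xx,\uu)\cdot\ww(\xx)\,d\yy\,d\xx,
\end{align*}
where $\ff(\yy,\xx,\uu)=-(2/\epsilon^{d+1}\omega_d)J^\epsilon(|\yy-\xx|)|\yy-\xx|^{-1/2}g'(\sqrt{|\yy-\xx|}S(\yy,\xx,\uu))\ee_{\yy-\xx}$. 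The key observation is the bond antisymmetry $\ff(\xx,\yy,\uu)=-\ff(\yy,\xx,\uu)$, which is immediate from the symmetry of the strain $S(\yy,\xx,\uu)=S(\xx,\yy,\uu)$ (proved by direct computation) and the antisymmetry $\ee_{\xx-\yy}=-\ee_{\yy-\xx}$.

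With this in hand, I would interchange the dummy variables $\xx\leftrightarrow\yy$ in the double integral, average the two expressions, and conclude
\begin{align*}
\int_\Omega \LL[\uu]\cdot\ww\,d\xx
=\tfrac{1}{2}\int_\Omega\int_{H_\epsilon(\xx)\cap\Omega}\ff(\yy,\xx,\uu)\cdot\bigl(\ww(\xx)-\ww(\yy)\bigr)\,d\yy\,d\xx.
\end{align*}
For $\ww=\cc$ constant, $\ww(\xx)-\ww(\yy)=\boldsymbol{0}$ and the integral vanishes. For $\ww=\QQ\xx$, we have $\ww(\xx)-\ww(\yy)=\QQ(\xx-\yy)$, while $\ff(\yy,\xx,\uu)$ is parallel to $\ee_{\yy-\xx}$. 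Since $\QQ^T=-\QQ$ forces $(\yy-\xx)\cdot\QQ(\yy-\xx)=0$, we obtain $\ee_{\yy-\xx}\cdot\QQ(\xx-\yy)=0$ pointwise, so the integrand vanishes identically. Adding the translation and rotation pieces and using linearity over $\Pi$ gives $\int_\Omega\LL[\uu]\cdot\ww\,d\xx=0$ for every $\ww\in\Pi$, which combined with part (i) shows $\LL[\uu]\in\mathcal{V}$. No step here is a serious obstacle; the only care needed is the Fubini justification, which is legitimate because the integrand is bounded with compact support in the variable $\yy-\xx$ and $\Omega$ is bounded.
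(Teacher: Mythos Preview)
Your proof is correct and takes essentially the same approach as the paper: an $L^\infty$ bound together with a variable-interchange (antisymmetrization) argument on the double integral. The paper phrases the orthogonality step as the ``symmetry'' $\int_\Omega\LL(\uu)\cdot\ww\,d\xx=\int_\Omega\uu\cdot\LL(\ww)\,d\xx=0$ combined with $\LL(\ww)=0$ for $\ww\in\Pi$ (Proposition~\ref{kernelstrain}), whereas you unwind it directly via the bond antisymmetry $\ff(\xx,\yy,\uu)=-\ff(\yy,\xx,\uu)$ and the pointwise identity $\ee_{\yy-\xx}\cdot\QQ(\xx-\yy)=0$; the underlying computation is the same.
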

This follows from the straight forward estimate $\Vert \LL(\uu)\Vert_\infty\leq C\Vert\uu\Vert_\infty$ and $\int_\Omega\LL[\uu]\cdot \ww \,d\xx=0$. {The orthogonality  follows from an integration by parts as in the proof of Lemma \ref{thm:symmet} of section \ref{sec:symmbounded} and Proposition \ref{kernelstrain}.} 

The operator $\LL[\uu]$ is continuously Fr\'echet differentiable on $\mathcal{V}$. 
\begin{thm}
    \label{thm:Frechetderiv}
    The linear transform $D(\LL)[\uu]:\mathcal{V}\rightarrow\mathcal{V}$ is the Fr\'echet derivative of $\LL$ and exists with respect to the $L^\infty(\Omega;\mathbb{R}^d)$ norm for all $\uu\in\mathcal{V}$, i.e.,
\begin{align}\label{freshet}
    \lim_{\Vert\Delta\uu\Vert_\infty\rightarrow 0}\frac{\Vert\LL[\uu + \Delta \uu] (\xx) - \LL[\uu](\xx) -D(\LL)[\uu]\Delta\uu \Vert_\infty}{\Vert\Delta\uu\Vert_\infty}=0. 
\end{align}
Moreover it is Lipshitz continuous in $\uu$, i.e., for $\boldsymbol{\delta},\Delta\uu \in\mathcal{V}$ there is a constant $C$ independent of $\boldsymbol{\delta}$ such that
\begin{align}\label{continuous}
    \frac{\Vert D(\LL)[\uu+\boldsymbol{\delta}]\Delta\uu-D(\LL)[\uu]\Delta\uu\Vert_\infty}{\Vert\Delta\uu\Vert_\infty}\leq C\Vert\boldsymbol{\delta}\Vert_\infty. 
\end{align}
\end{thm}
The theorem is proved in section \ref{sec:existence theory for soft}.


We now and assert the existence of solutions to the load control problem. 

\begin{thm}[Load control]
    \label{thm:exitence-of-perturbation}
    Let $\uu_0, \bb_0 \in  \mathcal{V}$ be such that $\LL[\uu_0] = \bb_0$ and assume ${D(\LL)[\uu_0]^{-1}}$ exists and is bounded on on $\mathcal{V}$, set
    \begin{align}\label{rdefined}
    R = \frac{C\epsilon^{\frac{5}{2} }}{\normiii{D(\LL)[\uu_0]^{-1}} },
    \end{align}
    where $C$ is obtained from \eqref{eq:const}. 
    Then for any given load path such that $\bb(t) : [0, T]\to \mathcal{V}$  is continuous and $\bb(t)\in B\left(\bb_0,\frac{R}{2 \normiii{D(\LL)[\uu_0]^{-1}}}\right)$, there exists a unique continuous solution path $\uu(t) \in \mathcal{V}$ lying inside $B(\uu_{0}, R)$ such that
\begin{align}\label{eq:loadcontrol}
    \LL[\uu(t)] = \bb(t), \hbox{  for $t \in [0, T]$.}
\end{align}	
\end{thm}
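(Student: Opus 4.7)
The plan is to apply the Banach fixed-point theorem at each fixed $t\in[0,T]$ to a Newton-type map, and then upgrade pointwise existence to a continuous path by the usual perturbation bound for parameterized contractions. For fixed $t$ I define the map $T_t:\mathcal{V}\to\mathcal{V}$ by
\begin{equation*}
T_t(\uu) := \uu - D(\LL)[\uu_0]^{-1}\bigl(\LL[\uu]-\bb(t)\bigr),
\end{equation*}
so that fixed points of $T_t$ are exactly solutions of $\LL[\uu]=\bb(t)$. That $T_t$ maps $\mathcal{V}$ into $\mathcal{V}$ follows because $\LL$ does (Lemma \ref{luinv}), $\bb(t)\in\mathcal{V}$ by hypothesis, and $D(\LL)[\uu_0]^{-1}$ is assumed defined on $\mathcal{V}$. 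I will then show that $T_t$ is a strict contraction on $\overline{B(\uu_0,R)}\cap\mathcal{V}$ that preserves this set.

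For the invariance estimate, using $\bb_0=\LL[\uu_0]$ I decompose
\begin{equation*}
T_t(\uu)-\uu_0 = D(\LL)[\uu_0]^{-1}\bigl(\bb(t)-\bb_0\bigr) + D(\LL)[\uu_0]^{-1}\bigl(D(\LL)[\uu_0](\uu-\uu_0)-(\LL[\uu]-\LL[\uu_0])\bigr).
\end{equation*}
The first summand is bounded in $L^\infty$-norm by $R/2$ directly from the hypothesis $\bb(t)\in B(\bb_0,R/(2\normiii{D(\LL)[\uu_0]^{-1}}))$. For the second summand I use the fundamental theorem of calculus,
\begin{equation*}
D(\LL)[\uu_0](\uu-\uu_0)-(\LL[\uu]-\LL[\uu_0]) = \int_0^1\bigl(D(\LL)[\uu_0]-D(\LL)[\uu_0+s(\uu-\uu_0)]\bigr)(\uu-\uu_0)\,ds,
\end{equation*}
and apply the Lipschitz estimate \eqref{continuous} of Theorem \ref{thm:Frechetderiv} to obtain the bound $\tfrac{1}{2}C\normiii{D(\LL)[\uu_0]^{-1}}R^2$. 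The explicit choice \eqref{rdefined} of $R$ is calibrated precisely so that this quantity is at most $R/2$.

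For the contraction property, for $\uu_1,\uu_2\in\overline{B(\uu_0,R)}$ I write
\begin{equation*}
T_t(\uu_1)-T_t(\uu_2)=D(\LL)[\uu_0]^{-1}\int_0^1\bigl(D(\LL)[\uu_0]-D(\LL)[\uu_2+s(\uu_1-\uu_2)]\bigr)(\uu_1-\uu_2)\,ds,
\end{equation*}
and because $\uu_2+s(\uu_1-\uu_2)$ also lies in $\overline{B(\uu_0,R)}$, another application of \eqref{continuous} yields $\|T_t(\uu_1)-T_t(\uu_2)\|_\infty\leq C\,\normiii{D(\LL)[\uu_0]^{-1}}\,R\,\|\uu_1-\uu_2\|_\infty\leq\tfrac{1}{2}\|\uu_1-\uu_2\|_\infty$ by the same calibration of $R$. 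Banach's theorem then delivers, for each $t$, a unique fixed point $\uu(t)\in\overline{B(\uu_0,R)}\cap\mathcal{V}$ satisfying $\LL[\uu(t)]=\bb(t)$.

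Finally, to get continuity in $t$, from $\uu(t)=T_t(\uu(t))$ and $\uu(s)=T_s(\uu(s))$ I split
\begin{equation*}
\uu(t)-\uu(s) = \bigl(T_t(\uu(t))-T_t(\uu(s))\bigr) + D(\LL)[\uu_0]^{-1}\bigl(\bb(s)-\bb(t)\bigr),
\end{equation*}
so that the contraction bound gives $\|\uu(t)-\uu(s)\|_\infty\leq 2\,\normiii{D(\LL)[\uu_0]^{-1}}\,\|\bb(t)-\bb(s)\|_\infty$, and continuity of $\uu(\cdot)$ follows from continuity of $\bb(\cdot)$. The main obstacle is purely quantitative: one must match the $\epsilon^{5/2}$ prefactor stipulated in \eqref{rdefined} with the $\epsilon$-dependence of the Lipschitz constant $C$ in \eqref{continuous}, which in turn must be derived from the kernel and the bounds on $g''$ and $g'''$; everything else is a routine application of Banach's theorem, and once the scaling in \eqref{continuous} is established the invariance and contraction estimates combine cleanly with the choice of $R$.
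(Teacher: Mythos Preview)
Your proposal is correct and follows essentially the same approach as the paper. Your Newton-type map $T_t(\uu)=\uu-D(\LL)[\uu_0]^{-1}(\LL[\uu]-\bb(t))$ is, up to the translation by $\uu_0$, precisely the paper's map $T_{\bb-\bb_0}$, and your invariance, contraction, and continuity arguments mirror the paper's Steps I and II; the only organizational difference is that the paper derives the $\epsilon^{-5/2}$ Lipschitz constant directly inside the contraction estimate (via the $g'''$ bound) rather than invoking \eqref{continuous}, whereas you invoke \eqref{continuous} and defer that computation---which you correctly flag as the quantitative point to check.
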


We now show that $D(\mathcal{L})[\boldsymbol\uu_0]^{-1}$ exists and is bounded on $\mathcal{V}$ for fields $\uu_0$ inside the strength domain. This includes the case $\uu_0=\boldsymbol{0}$.
We now define the stability tensor.
\begin{defn}{Stability tensor}\label{stabilitytensorload}
\begin{align}\label{stabtenload}
    \mathbb{A}[\uu](\xx) = \int\limits_{H_\epsilon(\xx) \cap \Omega}^{} \frac{J^\epsilon(\abs{\yy - \xx}) }{\epsilon^{d+1} \omega_d|\yy-\xx|}  g''\left( \sqrt{\abs{\yy - \xx}} S\left(\yy, \xx, \uu\right)\right) \ee_{\yy - \xx} \otimes \ee_{\yy - \xx}  d\yy.
\end{align}
\end{defn}
The stability tensor is a measurable tensor valued function.
We write $\mathbb{A}[\uu]-\gamma\mathbb{I}>0$ when for all $\xx\in\Omega$ and all $\vv\in\mathbb{R}^d$,  $\mathbb{A}[\uu](\xx)\vv\cdot\vv-\gamma|\vv|^2>0$. 

When the domain satisfies the interior cone condition and one has a deformation $\uu_0$ inside the strength domain we have;
    \begin{thm}[Invertibility]\label{thm:stabinvI}
    Assume $\uu_0$ lies strictly inside the strength domain, i.e., for all $\xx\in \Omega$ and $\yy\in\Omega\cap H_\epsilon(\xx)$ the strain $\sqrt{\xx-\yy}S(\yy,\xx,\uu_0)$ lies inside a closed interval contained inside the interval $(r^e,r^c)$,  and if $\Omega$ satisfies the interior cone condition
    then $Ker\{D(\LL)[\uu]\}=\{\boldsymbol{0}\}$ and there is a $\gamma>0$ for which $\mathbb{A}[\uu_0]-\gamma\mathbb{I}>0$ so
    $D(\mathcal{L})[\uu_0]^{-1}$ is well defined on $\mathcal{V}$ and $\normiii{D(\mathcal{L})[\uu_0]^{-1}}<\infty$.
    \end{thm}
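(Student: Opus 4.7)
The plan has three stages matching the three assertions in the theorem: establishing pointwise coercivity of $\mathbb{A}[\uu_0]$, deducing triviality of $\ker D(\LL)[\uu_0]$, and then obtaining bounded invertibility on $\mathcal{V}$ by combining these with the structure of $D(\LL)[\uu_0]$.

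For the first stage I will prove the pointwise bound $\mathbb{A}[\uu_0](\xx)\vv\cdot\vv\ge\gamma|\vv|^2$ uniformly in $\xx\in\Omega$. The hypothesis that $\uu_0$ lies strictly inside the strength domain puts $\sqrt{|\yy-\xx|}\,S(\yy,\xx,\uu_0)$ within a closed sub-interval of $(r^e,r^c)$, where $g''$ is continuous and strictly positive, so $g''\ge c_0>0$ uniformly along every admissible bond. Hence
\begin{equation*}
\mathbb{A}[\uu_0](\xx)\vv\cdot\vv \;\ge\; \frac{c_0}{\epsilon^{d+1}\omega_d}\int_{H_\epsilon(\xx)\cap\Omega}\frac{J^\epsilon(|\yy-\xx|)}{|\yy-\xx|}(\ee_{\yy-\xx}\cdot\vv)^2\,d\yy.
\end{equation*}
The interior cone condition supplies $C_{\lambda,\theta}(\xx,\ee_{\xx})\subset\Omega$ with fixed aperture $\theta$ at each $\xx$; restricting the integral to the intersection of this cone with a concentric ball on which $J^\epsilon$ is bounded below, the solid-angle integral $\int_{\mathrm{cap}}(\ee\cdot\vv)^2\,d\sigma(\ee)$ is continuous and strictly positive for every unit $\vv$ and every axis $\ee_{\xx}$. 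A compactness argument on the compact set $S^{d-1}\times S^{d-1}$ then delivers the desired $\gamma>0$ independent of $\xx$ and of $\vv$.

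For the second stage I will compute the symmetric bilinear form attached to $D(\LL)[\uu_0]$. Differentiating the integrand in \eqref{eq: force} and symmetrizing through integration by parts (as in Lemma \ref{thm:symmet}) gives
\begin{equation*}
\int_\Omega D(\LL)[\uu_0]\vv(\xx)\cdot\vv(\xx)\,d\xx \;=\; \int_\Omega\!\int_{H_\epsilon(\xx)\cap\Omega}\!\frac{J^\epsilon\,g''}{\epsilon^{d+1}\omega_d\,|\yy-\xx|}\big((\vv(\yy)-\vv(\xx))\cdot\ee_{\yy-\xx}\big)^2 d\yy\,d\xx,
\end{equation*}
which is non-negative, and by Stage~1 the factor $g''$ is strictly positive throughout. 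If $D(\LL)[\uu_0]\vv=\boldsymbol{0}$ then the form vanishes, forcing $S(\yy,\xx,\vv)=0$ for a.e.\ pair. Proposition \ref{kernelstrain} then places $\vv\in\Pi$, and since $\vv\in\mathcal{V}$ with $\overline{\mathcal{V}}^2\cap\Pi=\{\boldsymbol{0}\}$ (Definition \ref{H}), we conclude $\vv=\boldsymbol{0}$.

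For the third stage I will split
\begin{equation*}
D(\LL)[\uu_0]\vv(\xx) \;=\; 2\,\mathbb{A}[\uu_0](\xx)\vv(\xx) \;-\; 2\,\mathcal{K}[\uu_0]\vv(\xx),
\end{equation*}
where $\mathcal{K}[\uu_0]\vv(\xx)=\int_{H_\epsilon(\xx)\cap\Omega}\frac{J^\epsilon g''}{\epsilon^{d+1}\omega_d|\yy-\xx|}(\ee_{\yy-\xx}\otimes\ee_{\yy-\xx})\vv(\yy)\,d\yy$ has a weakly singular kernel, integrable in $\yy$ for $d\ge 2$, and is thus compact on $L^2(\Omega;\R^d)$. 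Stage~2 together with compactness of $\mathcal{K}[\uu_0]$ upgrades the bilinear form to a coercive one on $\overline{\mathcal{V}}^2$ (triviality of the kernel plus the Fredholm alternative for a self-adjoint compact perturbation of the uniformly positive pointwise multiplier $2\mathbb{A}[\uu_0]$), so Lax--Milgram yields an $L^2$-bounded inverse $D(\LL)[\uu_0]^{-1}:\overline{\mathcal{V}}^2\to\overline{\mathcal{V}}^2$. To lift to $L^\infty$, given $\ff\in\mathcal{V}$ I will take the $L^2$-solution $\vv$ and read off the identity
\begin{equation*}
\vv(\xx) \;=\; \big(2\,\mathbb{A}[\uu_0](\xx)\big)^{-1}\!\big(\ff(\xx) + 2\,\mathcal{K}[\uu_0]\vv(\xx)\big);
\end{equation*}
pointwise multiplication by $(2\mathbb{A}[\uu_0])^{-1}$ is bounded by $1/(2\gamma)$ via Stage~1, while the weak singularity of the kernel places $\mathcal{K}[\uu_0]\vv\in L^\infty$ when $\vv\in L^2$. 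Combining yields $\|\vv\|_\infty\le C\epsilon^{-2}\|\ff\|_\infty$ and hence $\normiii{D(\LL)[\uu_0]^{-1}}<\infty$. Preservation of $\mathcal{V}$ under the inverse follows from Lemma \ref{luinv} together with the orthogonality built into Definition \ref{H}.

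The main obstacle will be the last stage: tracking the $\epsilon$-dependence through the nonlocal Poincar\'e-type inequality underlying the $L^2$ coercivity (which degenerates as $\epsilon\to 0$) and through the weakly singular-kernel estimate for $\mathcal{K}[\uu_0]$, in order to recover the $C/\epsilon^2$ scaling appearing in \eqref{operatornorminv}. Stages~1 and 2 are then essentially local pointwise or quadratic-form calculations, but Stage~3 requires carefully combining the Fredholm/compactness machinery with the bootstrap from $L^2$ to $L^\infty$.
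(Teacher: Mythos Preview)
Your proposal is correct and follows essentially the same route as the paper: the same splitting of $D(\LL)[\uu_0]$ into a uniformly positive pointwise multiplier $\mathbb{A}$ (via the interior cone condition) plus a compact weakly singular integral operator, the same quadratic-form identity to identify the kernel with rigid motions, and the same $L^2\to L^\infty$ bootstrap exploiting that the integral piece maps $L^2$ into $L^\infty$. The only cosmetic differences are that the paper obtains the $L^2$ lower bound $\|D(\LL)[\uu_0]\ww\|_2\ge K\|\ww\|_2$ by a direct weak-compactness contradiction argument rather than invoking the Fredholm alternative and Lax--Milgram, and that the theorem only asserts $\normiii{D(\LL)[\uu_0]^{-1}}<\infty$ without quantitative $\epsilon$-dependence, so your concern about tracking the $C/\epsilon^2$ scaling is not actually needed here.
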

    
{
Now we show existence of a nontrivial initial data $(\uu_0, \bb_0)$.
Pick any $\uu_0 \ne 0$ in the strength domain of the material and set $\bb_0 = \LL[\uu_0]$. Now follows from \Cref{thm:exitence-of-perturbation} and \Cref{thm:stabinvI}
    that for any given load path such that $\bb(t) : [0, T]\to \mathcal{V}$  is continuous and $\bb(t)\in B\left(\bb_0,\frac{R}{2 \normiii{D(\LL)[\uu_0]^{-1}}}\right)$, there exists a unique continuous solution path $\uu(t) \in \mathcal{V}$ lying inside $B(\uu_{0}, R)$ such that
\begin{align}\label{eq:loadcontrol2}
    \LL[\uu(t)] = \bb(t), \hbox{  for $t \in [0, T]$.}
\end{align}	
\\
However even with this initial condition Theorem \ref{thm:exitence-of-perturbation} does not prohibit a solution $\uu(t)$ associated with a propagating crack like defect.
} 
    
We introduce the peridynamic potential energy.
\begin{align}
    \label{eq:energy}
    PD[\uu] =  \int\limits_{\Omega}^{}  \int\limits_{H_\epsilon(\xx) \cap \Omega}^{} \abs{\yy - \xx} \W^\epsilon(S(\yy, \xx, \uu)) d\yy d\xx.
\end{align}	

The total energy of the system is given by
\begin{align}
    \label{eq:total_energy}
    E[\uu] =  PD[\uu] - \int\limits_{\Omega}^{} \uu \cdot \bb_0 \ d\xx .
\end{align}	
The critical point $\uu_0$ for the total energy \eqref{eq:total_energy} is given by the solution to the Euler Lagrange equation
\begin{align}
    \label{eq:first-var}
  D E[\uu_0] = 0
\end{align}	
which is  
\begin{align*}
    \LL[\uu_0] = \bb_0.
\end{align*}

 \begin{thm}
    \label{thm:Unique}
     Assume $\uu_0$  lies strictly inside the strength domain and  is a critical point of the  energy $E[\uu]$ given by \eqref{eq:total_energy} for the choice $\bb = \bb_0$. 
     Then
    \begin{equation}\label{minimloc}
	E[\uu] \geq E[\uu_0],
    \end{equation}	
    over all displacements $\uu$ in the strength domain within the ball $B(\uu_0,R)$.
\end{thm}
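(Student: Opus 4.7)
The plan is to Taylor-expand $E$ about $\uu_0$ with integral remainder, eliminate the linear term using the first-order condition $DE[\uu_0]=\boldsymbol{0}$ (equivalent to $\LL[\uu_0]=\bb_0$), and exploit convexity of the strength domain together with the non-negativity of $g''$ there to bound the quadratic remainder. Since the load term $-\int_\Omega\uu\cdot\bb_0\,d\xx$ is linear in $\uu$, the second Fr\'echet derivatives coincide, $D^2E[\uu]=D^2PD[\uu]$, and Taylor's identity gives
\[
E[\uu]-E[\uu_0]=\int_0^1(1-s)\,D^2 PD[\uu_s](\uu-\uu_0,\uu-\uu_0)\,ds,\qquad \uu_s:=\uu_0+s(\uu-\uu_0).
\]

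Differentiating \eqref{contsit1}--\eqref{contsit2} twice in $\uu$ yields the quadratic form
\[
D^2PD[\uu](\vv,\vv)=\int_\Omega\!\int_{H_\epsilon(\xx)\cap\Omega}\frac{J^\epsilon(|\yy-\xx|)\,|\yy-\xx|}{\epsilon^{d+1}\omega_d}\,g''\!\left(\sqrt{|\yy-\xx|}\,S(\yy,\xx,\uu)\right)S(\yy,\xx,\vv)^2\,d\yy\,d\xx.
\]
Definition \ref{strength defn} records that the strength domain is convex in the strain variable and that $g''\geq 0$ there. Hence, for any $\uu$ in the strength domain, the segment $\uu_s$ also lies in the strength domain (a convex combination of admissible strains is admissible), so the integrand above is pointwise non-negative along the Taylor path. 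This delivers $E[\uu]\geq E[\uu_0]$ for every $\uu\in B(\uu_0,R)$ that lies in the strength domain.

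For uniqueness of the critical point in $\mathcal{V}$, suppose $\uu_1\in B(\uu_0,R)\cap\mathcal{V}$ is in the strength domain and $\LL[\uu_1]=\bb_0=\LL[\uu_0]$. The fundamental theorem of calculus applied to the Fr\'echet-differentiable map $\LL$ of Theorem \ref{thm:Frechetderiv} gives
\[
\boldsymbol{0}=\LL[\uu_1]-\LL[\uu_0]=\left(\int_0^1 D(\LL)[\uu_s]\,ds\right)(\uu_1-\uu_0).
\]
Theorem \ref{thm:stabinvI} provides the invertibility of $D(\LL)[\uu_0]$ on $\mathcal{V}$ with finite $\normiii{D(\LL)[\uu_0]^{-1}}$, while the Lipschitz estimate \eqref{continuous} controls $\normiii{\int_0^1 D(\LL)[\uu_s]\,ds-D(\LL)[\uu_0]}$ by $CR$. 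Shrinking $R$ so that $CR\,\normiii{D(\LL)[\uu_0]^{-1}}<1$, and taking it no larger than the radius \eqref{rdefined} of Theorem \ref{thm:exitence-of-perturbation}, a Neumann-series perturbation makes the averaged derivative invertible on $\mathcal{V}$ and forces $\uu_1=\uu_0$.

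The main obstacle is that the $L^\infty$ bound $\Vert\uu-\uu_0\Vert_\infty\le R$ does not control $\sqrt{|\yy-\xx|}\,S(\yy,\xx,\uu-\uu_0)$ uniformly for $\yy$ near $\xx$, so one cannot keep $\uu_s$ in the strength domain by a quantitative smallness argument on $R$. The resolution is entirely qualitative: the strength domain is defined by a pointwise convex constraint on the strain variable (Definition \ref{strength defn}), so admissibility of both endpoints $\uu_0$ and $\uu$ automatically propagates to every $\uu_s$, and $g''\geq 0$ along the Taylor path comes for free. Quantitative smallness of $R$ is needed only in the uniqueness step, to justify the Neumann-series inversion of the averaged derivative.
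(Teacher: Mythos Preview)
Your proof is correct and follows essentially the same route as the paper. For the energy inequality the paper uses the secant form of convexity of $g$ on $(r^e,r^c)$, namely $E[t\uu+(1-t)\uu_0]\le tE[\uu]+(1-t)E[\uu_0]$, and lets $t\to0$, whereas you use the equivalent second-order Taylor characterization $g''\ge0$; both rely on convexity of the strength domain to keep the path admissible. For uniqueness the paper also applies the fundamental theorem of calculus to $\LL[\uu]-\LL[\uu_0]$ and uses the Lipschitz estimate \eqref{continuous} to perturb the invertibility of $D(\LL)[\uu_0]$, so your Neumann-series phrasing is just a cleaner packaging of the same idea.
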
	
    The Theorem \ref{thm:Unique} is proved in section \ref{sec:energyminimize}.
    
    On combining theorems \ref{thm:exitence-of-perturbation}, \ref{thm:stabinvI}, and \ref{thm:Unique} we discover that a quasistatic PD  evolution exists within a neighborhood of a local energy minimizer over displacements inside the strength the domain.
    \begin{thm}
    \label{thm:Exist-hard loading}
     Assume $\uu_0$  lies strictly inside the strength domain and  is a critical point of the  energy $E[\uu]$ given by \eqref{eq:total_energy} for the choice $\bb = \bb_0$, 
    Under the hypothesis of Theorem \ref{thm:stabinvI} we can choose ${R}$ given by \eqref{rdefined} 
    such that for any continuous load path $\bb(t) : [0, T]\to \mathcal{V}$  with $\bb(t)\in B\left(\bb_0,\frac{R}{2 \normiii{D(\LL)[\uu_0]^{-1}}}\right)$, there exists a unique continuous solution path $\uu(t) \in \mathcal{V}$ lying inside $B(\uu_{0}, R)$ such that
\begin{align}\label{eq:equlib-t}
    \LL[\uu(t)] = \bb(t), \hbox{  for $t \in [0, T]$.}
\end{align}	
Here $\uu_0$ is a critical point and minimizer of the energy $E[\uu]$ for the choice $\bb=\bb_0$, 
    \begin{equation}\label{minimloc2}
	E[\uu] \geq E[\uu_0],
    \end{equation}	
    over all displacements $\uu$ belonging to the strength domain inside the ball $B(\uu_0,R)$.
\end{thm}
The energy inequality \eqref{minimloc2} of Theorem \ref{thm:Exist-hard loading} is proved in section \ref{sec:energyminimize}.

When $\uu_0$ does not lie in the strength domain and $\Omega$ does not satisfy an interior cone condition then one can no longer assert the existence of a $\gamma>0$ such that $\A-\gamma\I>0$. With this in mind
we present a necessary condition for the invertibility of $D(\mathcal{L})[\uu_0]$.


\begin{thm}[Necessity condition for invertibility]
    \label{thm:ness}
    Given $\uu_0\in\mathcal{V}$. If $D(\mathcal{L})[\uu_0]^{-1}$ exists on $\mathcal{V}$ then $Ker\{\mathbb{A}[\uu_0](\xx)\}=\{\boldsymbol{0}\}$ almost everywhere on $\Omega$.
\end{thm}

{This necessity condition is seen in the more general sufficient conditions for invertibility given in Theorem \ref{esistence of inverse in strength domain}. Part of the sufficient conditions require that all eigenvalues of $\mathbb{A}[\uu]$ lie outside an open interval containing $0$.}

{To prove the theorem we suppose $Ker\{\mathbb{A}[\uu_0](\xx)\}\not=\{\boldsymbol{0}\}$ on a set $\mathcal{F}\subset \Omega$ of nonzero Lebesgue measure  and proceed to construct an explicit sequence of nonzero elements $\{\ww_n\}\in\mathcal{V}$ to show that $\Vert D(\LL)[\uu_0]\ww_n\Vert_\infty\rightarrow 0$ but that $\Vert\ww_n\Vert_{\infty}=1$. This proves that $D(\mathcal{L})[\uu_0]^{-1}$ is not defined on $\mathcal{V}$. The proof is given in section \ref{sec:necessary}.}


When the load path $[0,T]\rightarrow\bb(t)$ is differentiable and its derivative is continuous in $t$ then energy balance holds. This is codified in the following theorem.
{\begin{thm}[Energy balance]
    \label{thm:Energy-load}
    If $\uu_0,\bb_0$ satisfy the hypotheses of theorem \ref{thm:exitence-of-perturbation}, $\bb(t)$ is continuously differentiable with respect to $t$ and $D(\LL)[\uu]^{-1}$ exists for all $\uu\in \overline{B(\uu_0,R)}$,
    then the derivative $ \frac{\partial\uu(t,\xx)}{\partial t}$ belongs to $L^\infty(\Omega;\mathbb{R}^3)$ and is continuous in $t$ and is related to the loading rate $\bb_t$ by
  \begin{align}\label{eq:limzero for partial uuthm}
   D(\mathcal{L})[\uu(t)]^{-1}\left(\frac{\partial \bb(t)}{\partial t}\right)
    =\frac{\partial \uu(t)}{\partial t},
\end{align}
 and we have the energy balance for $0<t<T$, given by
   \begin{align*}
	PD[\uu(t)] - \int\limits_{\Omega}^{} \uu(t) \cdot \bb(t) \ d\xx &= PD[\uu_0] - \int\limits_{\Omega}^{} \uu_0 \cdot \bb_0 \ d\xx \\
	-\int_0^t\int_\Omega\uu(\tau)\cdot\bb_t(\tau)\,d\xx\,d\tau,
    \end{align*}
    where $PD(\uu)$ is given by \eqref{eq:energy} and $\bb_t$ is the derivative of $\bb(t)$.
\end{thm}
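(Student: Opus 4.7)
The plan is to first establish $C^1$ regularity of the trajectory $t\mapsto \uu(t)$ with the prescribed formula $\uu_t=D(\LL)[\uu(t)]^{-1}\bb_t(t)$, then apply a chain rule to $PD[\uu(t)]-\int_\Omega \uu(t)\cdot\bb(t)\,d\xx$ and integrate. The identity $\LL[\uu(t)]=\bb(t)$ is an implicit equation that, by Theorems \ref{thm:Frechetderiv} and \ref{thm:stabinvI}, can be differentiated along the path. First I would shrink $R$ so that $B(\uu_0,R)$ is contained strictly inside the strength domain; Theorem \ref{thm:stabinvI} then applies to every $\uu(t)$ along the trajectory, giving uniform invertibility $\normiii{D(\LL)[\uu(t)]^{-1}}\le M$ for some $M$ independent of $t$, with continuous dependence on $\uu(t)$ via the continuity of $\mathbb{A}[\uu]$.

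The Lipschitz step in $t$ goes as follows. From the fundamental theorem and the Lipshitz estimate \eqref{continuous} on $D(\LL)[\uu]$,
\begin{align*}
\LL[\uu(t+h)]-\LL[\uu(t)]=D(\LL)[\uu(t)](\uu(t+h)-\uu(t))+\mathbf{r}(h),\quad \|\mathbf{r}(h)\|_\infty\le C\|\uu(t+h)-\uu(t)\|_\infty^{2}.
\end{align*}
Subtracting $\bb(t+h)-\bb(t)$, applying $D(\LL)[\uu(t)]^{-1}$, and using its uniform bound yields
\begin{align*}
\|\uu(t+h)-\uu(t)\|_\infty\le M\|\bb(t+h)-\bb(t)\|_\infty+CM\|\uu(t+h)-\uu(t)\|_\infty^{2}.
\end{align*}
After further shrinking $R$ so that $CM\cdot 2R<1/2$, the quadratic term is absorbed, giving $\|\uu(t+h)-\uu(t)\|_\infty\le 2M\|\bb(t+h)-\bb(t)\|_\infty\le C|h|$. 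Plugging this back in makes $\mathbf{r}(h)=O(h^{2})$, so dividing by $h$ and letting $h\to0$ produces \eqref{eq:limzero for partial uuthm}. Continuity of $\partial_t\uu(t)$ in $t$ follows from continuity of $\bb_t$, the Lipschitz continuity of $t\mapsto\uu(t)$ just established, and the continuity of $\uu\mapsto D(\LL)[\uu]^{-1}$ inherited from \eqref{continuous} together with the uniform bound on the inverse.

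Next I would verify that $PD$ is Fr\'echet differentiable on $L^{\infty}(\Omega;\mathbb{R}^{d})$ with $DPD[\uu]\cdot\boldsymbol\delta=\int_\Omega \LL[\uu]\cdot\boldsymbol\delta\,d\xx$. This is a direct Taylor expansion of $g$ (using the boundedness of $g'$ and $g''$) inside the nonlocal double integral, together with the Fubini/symmetry swap $\xx\leftrightarrow\yy$ that converts a two-argument integrand against $\boldsymbol\delta(\yy)-\boldsymbol\delta(\xx)$ into the single-point integrand $\boldsymbol\delta(\xx)\cdot\LL[\uu](\xx)$; this is exactly the integration by parts alluded to in Lemma \ref{thm:symmet}. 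Combined with the $C^1$ regularity of $\uu(t)$, the chain rule yields
\begin{align*}
\frac{d}{dt}PD[\uu(t)]=\int_\Omega \LL[\uu(t)]\cdot\partial_t\uu(t)\,d\xx=\int_\Omega \bb(t)\cdot\partial_t\uu(t)\,d\xx,
\end{align*}
since $\LL[\uu(t)]=\bb(t)$. Differentiating the load term gives $\frac{d}{dt}\int_\Omega \uu(t)\cdot\bb(t)\,d\xx=\int_\Omega \partial_t\uu(t)\cdot\bb(t)\,d\xx+\int_\Omega \uu(t)\cdot\bb_t(t)\,d\xx$, and combining produces $\frac{d}{dt}(PD[\uu(t)]-\int_\Omega\uu(t)\cdot\bb(t)\,d\xx)=-\int_\Omega \uu(t)\cdot\bb_t(t)\,d\xx$. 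Integrating this identity from $0$ to $t$ gives the asserted balance.

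The main obstacle is the implicit-function-type step establishing the differentiability of $\uu(t)$ and the formula \eqref{eq:limzero for partial uuthm}. The potentially subtle issue is that the error in the first-order expansion of $\LL$ is only controlled in terms of $\|\uu(t+h)-\uu(t)\|_\infty^{2}$, so a naive application of the bounded inverse is circular; the fix is the absorption argument above, which requires the additional shrinking of $R$ mentioned in the hypothesis. The chain rule for $PD$ is comparatively routine, relying only on the smoothness and bounded derivatives of $g$, Fubini's theorem, and the Fr\'echet differentiability framework already established in Theorem \ref{thm:Frechetderiv}.
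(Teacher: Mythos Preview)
Your proposal is correct and follows essentially the same route as the paper: first establish differentiability of $t\mapsto\uu(t)$ by an implicit-function argument based on the Fr\'echet expansion of $\LL$ and the bounded inverse of $D(\LL)[\uu(t)]$, then use the chain rule $\frac{d}{dt}PD[\uu(t)]=\int_\Omega\LL[\uu(t)]\cdot\uu_t\,d\xx=\int_\Omega\bb(t)\cdot\uu_t\,d\xx$ and integrate in time. Your absorption argument for the quadratic remainder is in fact cleaner than the paper's version, which phrases the same step as a contradiction (if the remainder does not vanish then $\|\Delta\uu/\Delta t\|_\infty$ blows up, contradicting $\bb_t\in L^\infty$).

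One small point: you obtain uniform invertibility on $B(\uu_0,R)$ by shrinking $R$ so that the ball lies strictly inside the strength domain and then invoking Theorem~\ref{thm:stabinvI} at each $\uu(t)$. The paper instead obtains it by a Neumann-type perturbation from $D(\LL)[\uu_0]^{-1}$ using the Lipschitz estimate \eqref{continuous}, i.e.\ by choosing $R$ so that $\normiii{D(\LL)[\uu]-D(\LL)[\uu_0]}\cdot\normiii{D(\LL)[\uu_0]^{-1}}<1$ for $\uu\in B(\uu_0,R)$. The perturbation route has the advantage of covering the generality of Theorem~\ref{thm:exitence-of-perturbation}, where only boundedness of $D(\LL)[\uu_0]^{-1}$ is assumed and $\uu_0$ need not lie in the strength domain; your route as written covers only the setting of Theorem~\ref{thm:Exist-hard loading}. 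Replacing your first paragraph by the standard perturbation argument fixes this with no change to the rest of your proof.
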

If instead of having existence of $D(\LL)[\uu]^{-1}$ for all $\uu\in \overline{B(\uu_0,R)}$, we only know that $D(\LL)[\uu_0]^{-1}$ exists we can appeal to Banach's Lemma to find a quasistatic evolution $\uu(t)$ that satisfies the energy balance in the neighborhood of $\uu_0$. This is illustrated in section \ref{energybalance}. }

\section{Existence theory for load control}%
\label{sec:existence theory for soft}

In this section we give the proofs of theorem \ref{thm:Frechetderiv} and theorem \ref{thm:exitence-of-perturbation}. We first prove theorem \ref{thm:exitence-of-perturbation} as theorem \ref{thm:Frechetderiv} follows from techniques developed in its proof. In order to prove theorem \ref{thm:exitence-of-perturbation} we establish the necessary prerequisites.
For a fixed $\uu\in\mathcal{V}$ we define the linear transform $D(\LL)[\uu]$ acting on $\Delta\uu\in\mathcal{V}$ by
\begin{align}
    \label{eq:derivative}
    &D(\LL)[\uu] \Delta \uu =\nonumber\\
    &-\int\limits_{H_\epsilon(\xx) \cap \Omega}^{} \frac{J^\epsilon(\abs{\yy - \xx}) }{\epsilon^{d+1} \omega_d}  g''\left( \sqrt{\abs{\yy - \xx}} S\left(\yy, \xx, \uu\right)\right) S(\yy, \xx, \Delta \uu) \ee_{\yy - \xx} d\yy,
\end{align}	
and we first note that it  a linear functional on $L^\infty(\Omega;\mathbb{R}^2)$.
\begin{propn}
   \label{propn:deriv}
    The linear functional $D(\LL)[\uu]$ exists for all $\uu\in \cal{V}$ and is a bounded linear functional for  $\Delta \uu\in L^\infty(\Omega;\mathbb{R}^d)$. 
\end{propn}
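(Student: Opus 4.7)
The plan is to verify the two claims separately: first linearity of the map $\Delta\uu\mapsto D(\LL)[\uu]\Delta\uu$, and then boundedness as a linear map from $L^\infty(\Omega,\mathbb{R}^d)$ into itself, with operator bound of order $1/\epsilon^2$.

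Linearity is immediate from the definition \eqref{eq:derivative}. The nonlocal strain operator
\[
S(\yy,\xx,\Delta\uu)=\frac{\Delta\uu(\yy)-\Delta\uu(\xx)}{|\yy-\xx|}\cdot\ee_{\yy-\xx}
\]
is linear in $\Delta\uu$, and the factor $g''(\sqrt{|\yy-\xx|}\,S(\yy,\xx,\uu))$ depends only on the fixed base point $\uu$, not on $\Delta\uu$. Hence $D(\LL)[\uu]$ is linear in $\Delta\uu$ by linearity of the integral.

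For boundedness I would control the integrand pointwise and then integrate. Using that $g''$ attains its maximum at $0$ (so $|g''(r)|\leq g''(0)$ for all $r$), that $J$ is bounded (say by $\|J\|_\infty$), and the elementary estimate
\[
|S(\yy,\xx,\Delta\uu)|\leq\frac{|\Delta\uu(\yy)-\Delta\uu(\xx)|}{|\yy-\xx|}\leq\frac{2\,\|\Delta\uu\|_\infty}{|\yy-\xx|},
\]
together with $|\ee_{\yy-\xx}|=1$, the absolute value of the integrand in \eqref{eq:derivative} is bounded for almost every $\xx$ by
\[
\frac{2\,\|J\|_\infty\,g''(0)}{\epsilon^{d+1}\omega_d}\cdot\frac{\|\Delta\uu\|_\infty}{|\yy-\xx|}.
\]
Passing to spherical coordinates $r=|\yy-\xx|$ on $H_\epsilon(\xx)\cap\Omega\subset H_\epsilon(\xx)$, the radial integral $\int_0^\epsilon r^{d-2}\,dr=\epsilon^{d-1}/(d-1)$ converges for $d\geq 2$, producing a volume integral of order $\epsilon^{d-1}$. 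Combined with the $\epsilon^{-(d+1)}$ prefactor this yields
\[
|D(\LL)[\uu]\Delta\uu(\xx)|\leq\frac{C}{\epsilon^{2}}\,\|\Delta\uu\|_\infty
\]
uniformly in $\xx$, proving the $L^\infty$ bound \eqref{bdoperator}. Measurability of $\xx\mapsto D(\LL)[\uu]\Delta\uu(\xx)$ follows from Fubini together with the joint measurability of the integrand; taking the essential supremum in $\xx$ completes the boundedness claim.

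There is no substantive obstacle here: the only point requiring care is the mild singularity $1/|\yy-\xx|$ in the integrand, which is integrable in dimension $d=2,3$ and is precisely what forces the $\epsilon^{-2}$ dependence in the operator bound rather than a tighter scaling. The fact that the target lies in $\mathcal{V}$ (so that $D(\LL)[\uu]$ maps $\mathcal{V}$ into itself for use in later theorems) will be handled separately, as in Lemma \ref{luinv}, by the same symmetry-with-rigid-motions argument; for this proposition only boundedness on $L^\infty(\Omega,\mathbb{R}^d)$ is required.
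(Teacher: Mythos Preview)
Your proof is correct and follows essentially the same approach as the paper: bound $|g''|$ by $g''(0)$, bound $|S(\yy,\xx,\Delta\uu)|$ by $2\|\Delta\uu\|_\infty/|\yy-\xx|$, and integrate the resulting $1/|\yy-\xx|$ singularity over $H_\epsilon(\xx)$ in spherical coordinates to obtain the $C/\epsilon^2$ bound. The paper's proof is terser but uses the identical ingredients.
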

\proof{}
Recalling $g''(0)=\max\{|g''(r)|\}$ the proposition follows from the string of inequalities
\begin{align*}
    &\Vert D(-\LL)[\uu] \Delta \uu \Vert_\infty \leq\\
   & {\rm esssup}_{\xx\in \Omega}\{ \int\limits_{H_\epsilon(\xx) \cap \Omega}^{} |\frac{J^\epsilon(\abs{\yy - \xx}) }{\epsilon^{d+1} \omega_d}  g''\left( \sqrt{\abs{\yy - \xx}} S\left(\yy, \xx, \uu\right)\right) S(\yy, \xx, \Delta \uu) \ee_{\yy - \xx}| d\yy \}\\
   &\leq C\,{\rm esssup}_{\xx\in \Omega}\{\frac{1}{\epsilon^{d+1}\omega_d} \int\limits_{H_\epsilon(\xx) \cap \Omega}^{} \frac{|\Delta \uu| }{|\yy-\xx|} d\yy \}
   \leq \frac{C}{\epsilon^{2}}||\Delta \uu||_\infty,
\end{align*}
where the constant $C$ is independent of $\Delta\uu$ and $\epsilon.$
\qed

The existence of a quasistatic evolution is based on fixed point theory. 
We start by proposing a functional $ T_{\bb - \bb_0}(\uu - \uu_0)$ defined on $B(\uu_0,R)$ such that at its fixed point $\uu^\ast-\uu_0$ one has that $\LL[\uu^\ast] = \bb$.
\begin{propn}
\label{propn:FixedPoint}
    Define
\begin{align}
\label{eq:hardInner}
    F_{\uu_0}(\uu - \uu_0) =& -\LL[(\uu-\uu_0) +\uu_0] + \LL[\uu_0] - D(-\LL)[\uu_0] (\uu - \uu_0)\\
    =& -\LL[\uu] + \LL[\uu_0] - D(-\LL)[\uu_0] (\uu - \uu_0)\nonumber
    \end{align}	
and the map
\begin{align}
\label{eq:fixed point map}
    T_{\bb - \bb_0}(\uu - \uu_0) 
    & = -D(-\LL)[\uu_0]^{-1} \left(F_{\uu_0}(\uu - \uu_0) +  (\bb - \bb_0)\right).
\end{align}	
If 
$
    T_{\bb - \bb_0}(\uu^* - \uu_0) = \uu^* - \uu_0,
    $
then $\LL[\uu^*] = \bb$. 
\end{propn}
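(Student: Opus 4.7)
The plan is to verify the claim by a direct algebraic manipulation: apply the operator $-D(-\LL)[\uu_0]$ to both sides of the fixed-point equation, substitute the definition of $F_{\uu_0}$, and observe that everything cancels except the desired equation.

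First I would note that by hypothesis $D(\LL)[\uu_0]$ (and hence $D(-\LL)[\uu_0] = -D(\LL)[\uu_0]$) is invertible on $\mathcal{V}$, so the map $T_{\bb - \bb_0}$ is well defined on $B(\uu_0, R)$. Starting from
\begin{align*}
\uu^* - \uu_0 = T_{\bb - \bb_0}(\uu^* - \uu_0) = -D(-\LL)[\uu_0]^{-1}\bigl(F_{\uu_0}(\uu^* - \uu_0) + (\bb - \bb_0)\bigr),
\end{align*}
I would apply $-D(-\LL)[\uu_0]$ to both sides to get
\begin{align*}
-D(-\LL)[\uu_0](\uu^* - \uu_0) = F_{\uu_0}(\uu^* - \uu_0) + (\bb - \bb_0).
\end{align*}

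Next I would substitute the definition of $F_{\uu_0}$ given in \eqref{eq:hardInner}, namely $F_{\uu_0}(\uu^* - \uu_0) = -\LL[\uu^*] + \LL[\uu_0] - D(-\LL)[\uu_0](\uu^* - \uu_0)$, so that the identity becomes
\begin{align*}
-D(-\LL)[\uu_0](\uu^* - \uu_0) = -\LL[\uu^*] + \LL[\uu_0] - D(-\LL)[\uu_0](\uu^* - \uu_0) + (\bb - \bb_0).
\end{align*}
The linear terms $D(-\LL)[\uu_0](\uu^* - \uu_0)$ cancel on both sides, leaving $0 = -\LL[\uu^*] + \LL[\uu_0] + (\bb - \bb_0)$, and using the hypothesis $\LL[\uu_0] = \bb_0$ this rearranges to $\LL[\uu^*] = \bb$, as desired.

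There is essentially no obstacle here; the proposition is a design check on the fixed-point map, confirming that fixed points of $T_{\bb-\bb_0}$ correspond exactly to solutions of the load-control equation. The real analytic work, which is deferred to subsequent propositions, is to verify that $T_{\bb - \bb_0}$ maps $B(\uu_0, R)$ into itself and is a contraction, so that a fixed point actually exists; that step is where the Lipschitz estimate \eqref{continuous} on $D(\LL)[\uu]$ and the bound on $\normiii{D(\LL)[\uu_0]^{-1}}$ together with the smallness condition $\bb(t) \in B(\bb_0, R/(2\normiii{D(\LL)[\uu_0]^{-1}}))$ will enter.
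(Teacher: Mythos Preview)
Your proof is correct and follows essentially the same route as the paper: apply $-D(-\LL)[\uu_0]$ to the fixed-point identity, substitute the definition of $F_{\uu_0}$, cancel the common linear term, and use $\LL[\uu_0]=\bb_0$.
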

\proof
Applying $-D(-\LL)[\uu_0]$ to the fixed point equation
\begin{align*}
    T_{\bb - \bb_0}(\uu^* - \uu_0) = \uu^* - \uu_0.
\end{align*}	
we get from the definition of $T, F$ and $\LL[\uu_0] = \bb_0$ that
\begin{align*}
    F_{\uu_0}(\uu^* - \uu_0) + (\bb - \bb_0) &= -D(-\LL)[\uu_0](\uu^* - \uu_0) \\
    -\LL[\uu^*] + \LL[\uu_0] - D(-\LL)[\uu_0] (\uu^* - \uu_0) 
     + (\bb - \bb_0) &= -D(-\LL)[\uu_0](\uu^* - \uu_0) \\
    -\LL[\uu^*] + \LL[\uu_0]  
     + (\bb - \bb_0) &= 0 \\
     \LL[\uu^*] &= \bb.
\end{align*}	
Therefore $\uu^*$ solves equation \eqref{eq:pdQuasi} with body force density $\bb$.
\qed

The  proof of Theorem \ref{thm:exitence-of-perturbation} now proceeds in two steps. Step I shows
that $T_{\bb - \bb_0}$ is a contraction on the ball on $B(\uu_0, R)$, defined by \eqref{ballR} and therefore has a unique fixed point. Step II shows that for any prescribed continuous load path $\bb(0)=\bb_0$ and $\bb(t)\in \mathcal{V}$, such that $\bb: [0, T] \to  B\left(\bb_0,\frac{R}{2 \norm{D^{-1}(-\LL)[\uu_0]}_\infty}\right)$there exists a unique continuous solution path $\uu(t) \in \mathcal{V}$ lying inside $B(\uu_0, R)$.

\proof[Proof of Theorem \ref{thm:exitence-of-perturbation}]

{\bf Step I}. 
Choose $\uu$, $\uu'$ in $B(\uu_0,R)$ and from the definition of $T_{\bb - \bb_0}$,
\begin{align*}
   & \norm{T_{\bb - \bb_0} (\uu - \uu_0) - T_{\bb - \bb_0}(\uu' - \uu_0)}_{\infty}\\
    & = \norm{D(-\LL)[\uu_0]^{-1} \left( F_{\uu_0}(\uu - \uu_0) - F_{\uu_0}(\uu' - \uu_0) \right)}_{\infty}.
\end{align*}	

Using the linearity of $D(-\LL)[\uu_0]$ gives
\begin{align*}
    F_{\uu_0}(\uu - \uu_0) - F_{\uu_0}(\uu' - \uu_0)
    & = -\LL(\uu) + \LL(\uu') - D(-\LL)[\uu_0] (\uu - \uu').
\end{align*}	
Recall from the definition of $\LL$, 
\begin{align*}
    & -\LL(\uu) + \LL(\uu') \\
    & = 
    \int\limits_{H_\epsilon(\xx) \cap \Omega}^{} \frac{J^\epsilon(\abs{\yy - \xx} )}{\epsilon^{d+1} \omega_d} \frac{g'(\sqrt{\abs{\yy - \xx}} S(\yy, \xx, \uu)) - g'(\sqrt{\abs{\yy - \xx} } S(\yy, \xx, \uu' )} {\sqrt{\abs{\yy - \xx} } } \ee_{\yy - \xx} d\yy \\
    & = 
    \int\limits_{H_\epsilon(\xx) \cap \Omega}^{} \frac{J^\epsilon(\abs{\yy - \xx} )}{\epsilon^{d+1} \omega_d} \int\limits_{0}^{1} g''(r(t)) dt\ S(\yy, \xx, \uu - \uu') \ee_{\yy - \xx} d\yy,
\end{align*}	
where we have used
\begin{align*}
     g'(\sqrt{\abs{\yy - \xx}} S(\yy, \xx, \uu)) - g'(\sqrt{\abs{\yy - \xx} } S(\yy, \xx, \uu' ) 
     & = 
    \int\limits_{\sqrt{\abs{\yy - \xx}} S(\yy, \xx, \uu')}^{\sqrt{\abs{\yy - \xx} } S(\yy, \xx, \uu )} g''(r) dr  
    \\
     & = 
    \int\limits_{0}^{1} g''(r(t)) r'(t) dt
\end{align*}	
with
\begin{align*}
    r(t) =\sqrt{\abs{\yy - \xx} }  ((1 - t) S(\yy, \xx, \uu') + t S(\yy, \xx, \uu))
\end{align*}	
and hence 
\begin{align*}
    r'(t) = \sqrt{\abs{\yy - \xx} } (S(\yy, \xx, \uu) - S(\yy, \xx, \uu')) = \sqrt{\abs{\yy - \xx} }  S(\yy, \xx, \uu - \uu').
\end{align*}	
Also recall from equation \eqref{eq:derivative}
\begin{align*}
&    D(-\LL)[\uu_0] (\uu - \uu') = \\
&     \int\limits_{H_\epsilon(\xx) \cap \Omega}^{} \frac{J^\epsilon(\abs{\yy - \xx)} }{\epsilon^{d+1} \omega_d}  g''\left( \sqrt{\abs{\yy - \xx}} S\left(\yy, \xx, \uu_0\right)\right) S(\yy, \xx,  \uu - \uu') \ee_{\yy - \xx} d\yy.
\end{align*}	
Therefore,
\begin{align*}
    &F_{\uu_0}(\uu - \uu_0) - F_{\uu_0}(\uu' - \uu_0) 
      = -\LL(\uu) + \LL(\uu') - D(-\LL)[ \uu_0]( \uu - \uu') \\
    & = \int\limits_{H_\epsilon(\xx) \cap \Omega}^{} \frac{J^\epsilon(\abs{\yy - \xx} )}{\epsilon^{d+1} \omega_d}  \int\limits_{0}^{1} \left[ g''  \left( \sqrt{\abs{\yy  - \xx} } S\left(\yy, \xx, (1-t)\uu' + t \uu \right) \right) - g''\left(\sqrt{\abs{\yy -\xx} } S(\yy, \xx, \uu_0) \right) \right]  dt\\
    &\quad \quad \quad
     S(\yy, \xx, \uu - \uu') \ee_{\yy - \xx} d\yy \\
    & = \int\limits_{H_\epsilon(\xx) \cap \Omega}^{} \frac{J^\epsilon(\abs{\yy - \xx} )}{\epsilon^{d+1} \omega_d}  \int\limits_{0}^{1} \int\limits_{0}^{1} g'''(s_t(\tau)) d\tau \sqrt{\abs{\yy - \xx} } S(\yy, \xx, (1-t) \uu' + t\uu - \uu_0)  dt\ 
    \\&\quad \quad \quad
     S(\yy, \xx, \uu - \uu') \ee_{\yy - \xx} d\yy,
\end{align*}	
where again, we have used
\begin{align*}
    g''(r(t)) - g''(\sqrt{\abs{\yy - \xx} S(\yy, \xx, \uu_0)}  = \int\limits_{0}^{1} g'''(s_t(\tau)) s_t'(\tau) d\tau
\end{align*}	
with
$
    s_t(\tau) = \sqrt{\abs{\yy - \xx} }  ((1 - \tau) S(\yy, \xx, \uu_0) + \tau\ S(\yy, \xx, (1-t) \uu' + t \uu))
    $
and hence
\begin{align*}
    s_t'(\tau) & = \sqrt{\abs{\yy - \xx} }  S (\yy, \xx, (1-t)\uu' + t \uu - \uu_0).
\end{align*}	
So, 
\begin{align*}
    & F_{\uu_0}(\uu - \uu_0) - F_{\uu_0}(\uu' - \uu_0)  \\
    & = \int\limits_{H_\epsilon(\xx) \cap \Omega}^{} \frac{J^\epsilon(\abs{\yy - \xx} )}{\epsilon^{d+1} \omega_d}  \int\limits_{0}^{1} \int\limits_{0}^{1} g'''(s_t(\tau)) d\tau \sqrt{\abs{\yy - \xx} } S(\yy, \xx, (1-t) \uu' + t\uu - \uu_0)  dt\ 
    \\
    &\quad\quad\quad S(\yy, \xx, \uu - \uu') \ee_{\yy - \xx} d\yy
\end{align*}	
We now work to bound the integrand. First using the Cauchy-Schwartz inequality and $\abs{\ee_{\yy - \xx}}  = 1$, to see that
$
    \abs{S(\yy, \xx, \uu)}  \le \frac{\abs{\uu(\yy) - \uu(\xx)}}{\abs{\yy - \xx} }.
    $
Now define 
\begin{align*}
    \uu_t := (1 - t) \uu' + t \uu,
\end{align*}	
and
\begin{align*}
    & \abs{
    F_{\uu_0}(\uu - \uu_0) - F_{\uu_0}(\uu' - \uu_0) 
} \\
    & \le
    \frac{\norm{J^\epsilon}_\infty }{\epsilon^{d+1} \omega_d} \norm{g'''}_{\infty}  \int\limits_{H_\epsilon(\xx) \cap D}^{} \frac{\sqrt{\abs{\yy - \xx} } }{\abs{\yy - \xx}^2}
    \int\limits_{0}^{1} 
    \\
    &\quad\quad\quad \abs{(\uu_t - \uu_0)(\yy) - (\uu_t - \uu_0)(\xx)} dt \abs{(\uu - \uu')(\yy) - (\uu - \uu')(\xx)} d\yy 
    \\
    & \le
    \frac{4 \norm{J^\epsilon}_\infty }{\epsilon^{d+1} w_d} \norm{g'''}_{\infty}
    \norm{\uu - \uu'}_{\infty}
    \int\limits_{H_\epsilon(\xx) \cap \Omega}^{} \frac{1}{\abs{\yy - \xx}^{\frac{3}{2} }} d\yy \int\limits_{0}^{1}  \norm{\uu_t - \uu_0}_{\infty} dt,
\end{align*}	
where
\begin{align*}
    \int\limits_{H_\epsilon(\xx)}^{} \abs{\yy - \xx}^{-\frac{3}{2} } d\yy
    \le \omega_d \int\limits_{0}^{\epsilon} \rho^{-\frac{3}{2} } \rho^{d-1} d\rho
    = \frac{\omega_d}{d- \frac{3}{2} }   \epsilon^{d - \frac{3}{2} },
\end{align*}	
so for all $\xx \in D$,
\begin{align}
    \label{eq:F-diff}
     \abs{
    F_{\uu_0}(\uu - \uu_0) - F_{\uu_0}(\uu' - \uu_0) 
} 
    & \le C(\epsilon, g, J) \norm{\uu - \uu'}_{\infty} \int\limits_{0}^{1} \norm{\uu_t - \uu_0}_{\infty}dt,
\end{align}	
where
\begin{align}
\label{eq:const}
    C(\epsilon, g, J) = \frac{ \norm{J^\epsilon}_\infty }{\epsilon^{\frac{5}{2} }} \norm{g'''}_{\infty}.
\end{align}	
We have
\begin{align*}
    \abs{\uu_t - \uu_0} = \abs{(1-t) \uu' + t \uu - (1-t) \uu_0 - t \uu_0}
    & = \abs{(1-t) (\uu' - \uu_0) + t (\uu - \uu_0)},
\end{align*}	
therefore, if $\uu, \uu' \in B(R, \uu_0)$, 
\begin{align}\label{taylorsremainder}
    \norm{\uu_t - \uu_0}_{\infty} \le (1-t) \norm{\uu' - \uu_0}_{\infty} + t \norm{\uu - \uu_0}_{\infty} 
    \le (1-t) R + t R = R
\end{align}	
so
\begin{align*}
\int\limits_{0}^{1}  \norm{\uu_t - \uu_0}_{\infty} dt \le R.
\end{align*}	
It now follows from equation \eqref{eq:F-diff} that
\begin{align}
    \label{eq:inf-bound-diff}
     \norm{
    F_{\uu_0}(\uu - \uu_0) - F_{\uu_0}(\uu' - \uu_0) 
}_{\infty}
    & \le C(\epsilon, g, J) \norm{\uu - \uu'}_{\infty} R,
\end{align}	
and
\begin{align*}
    &\norm{T_{\bb - \bb_0} (\uu - \uu_0) - T_{\bb - \bb_0}(\uu' - \uu_0)}_{\infty} \\
    & \le \norm{D(-\LL)[\uu_0]^{-1} \left( F_{\uu_0}(\uu - \uu_0) - F_{\uu_0}(\uu' - \uu_0) \right)}_{\infty} \\
    & \le \norm{D(-\LL)[\uu_0]^{-1}}_{\infty} \norm{\left( F_{\uu_0}(\uu - \uu_0) - F_{\uu_0}(\uu' - \uu_0) \right)}_{\infty} \\
    & \le R \ C(\epsilon, g, J) \normiii{D(-\LL)[\uu_0]^{-1}}
     \norm{\uu - \uu'}_{\infty} 
\end{align*}	
Last choosing $R$ such that
\begin{align}
    \label{eq:choice-R}
     C(\epsilon, g, J) \normiii{D(-\LL)[\uu_0]^{-1}}  R = \half
\end{align}	
it follows that $T_{\bb - \bb_0}$ is a contraction for $\uu$ and $\uu'$ in the ball $B(\uu_0, R)$. 





It remains to show that  if $\norm{\uu - \uu_0}_{\infty} \le R$ then $\norm{T_{\bb - \bb_0}(\uu - \uu_0)}_{\infty} \le R$.
Indeed, for $\uu \in B(\uu_0, R)$,
\begin{align*}
    &\norm{T_{\bb - \bb_0}(\uu - \uu_0)}_{\infty}\leq\\
    &\le \norm{D(-\LL)[\uu_0]^{-1}( F_{\uu_0}(\uu - \uu_0))}_{\infty} + \norm{D(-\LL)[\uu_0]^{-1}(\bb - \bb_0)}_{\infty} \\
    & \le \norm{D(-\LL)[\uu_0]^{-1}}_{\infty} \norm{F_{\uu_0}(\uu - \uu_0)}_{\infty}  + 
    \norm{D(-\LL)[\uu_0]^{-1}}_{\infty} \norm{\bb - \bb_0}_{\infty}.
\end{align*}	

Taking $\uu' = \uu_0$ in \eqref{eq:F-diff} gives 
\begin{align*}
     \norm{F_{\uu_0}(\uu - \uu_0)}_{\infty}
    & = \norm{F_{\uu_0}(\uu - \uu_0) - F_{\uu_0}(\uu_0 - \uu_0)  }_{\infty}  \\
    & \le C(\epsilon, g, J) \norm{\uu - \uu_0}_{\infty} \int\limits_{0}^{1} t \norm{\uu - \uu_0} _{\infty} dt  \\
    & = C(\epsilon, g, J) \norm{\uu - \uu_0}^2_{\infty}.
\end{align*}	
Then choose $\bb$ with
\begin{align*}
    \norm{\bb - \bb_0}_{\infty} \le \frac{R}{2 \norm{D^{-1}(-\LL)[\uu_0]} }_{\infty}.
\end{align*}	
to get
\begin{align*}
    \norm{T_{\bb - \bb_0}(\uu - \uu_0)}_{\infty} 
    & \le C(\epsilon, g, J) \norm{D(-\LL)[\uu_0]^{-1}}_{\infty} R^2  + 
    \frac{R}{2} .
\end{align*}	

The choice of $R$ from equation \eqref{eq:choice-R} implies
\begin{align*}
    \norm{T_{\bb - \bb_0}(\uu - \uu_0)}_{\infty}  \le R.
\end{align*}	
So for this choice of $R$, Banach's fixed point theorem implies that there exists unique fixed point $\uu^* \in B(\uu_0, R)$  such that
\begin{align*}
    T_{\bb - \bb_0}(\uu^* - \uu_0) = \uu^* - \uu_0.
\end{align*}	
Step I now follows from Proposition \ref{propn:FixedPoint}.

    


{\bf Step II}.
To show the continuity of $T$ in $\bb - \bb_0$, note
\begin{align}\label{Tcont}
    &\sup_{\uu - \uu_0 \in B(0,R)} \norm{T_{\bb - \bb_0}(\uu - \uu_0)
    - T_{\widetilde{\bb} - \bb_0}(\uu - \uu_0)}_{\infty} \\
    & = \norm{D(-\LL)[\uu_0]^{-1}(\bb - \widetilde{\bb})}_\infty  
    \\
    & \le \norm{D(-\LL)[\uu_0]^{-1}}_{\infty} \norm{\bb - \widetilde{\bb}}_\infty.\nonumber
\end{align}	
Now suppose for $\bb-\bb_0$ we have the fixed point $T_{\bb - \bb_0}(\uu - \uu_0) =\uu-\uu_0$ and for $\widetilde{\bb}-\bb_0$ we have the fixed point $T_{\widetilde{\bb} - \bb_0}(\widetilde{\uu} - \uu_0) =\widetilde{\uu}-\uu_0$. So from \eqref{Tcont} and since $T_{\bb-\bb_0}$ is a contraction we get
\begin{align}\label{Tcont2}
    \Vert\uu - \widetilde{\uu}\Vert_\infty & =\norm{T_{\bb - \bb_0}(\uu - \uu_0) - T_{\widetilde{\bb} - \bb_0}(\widetilde{\uu} - \uu_0)}_{\infty} \\
    & = \norm{T_{\bb - \bb_0}(\uu - \uu_0) - T_{{\bb} - \bb_0}(\widetilde{\uu} - \uu_0)}_{\infty} \nonumber\\
    & + \norm{T_{\bb - \bb_0}(\widetilde{\uu} - \uu_0) - T_{\widetilde{\bb} - \bb_0}(\widetilde{\uu} - \uu_0)}_{\infty}\nonumber\\
    & \leq\frac{1}{2}\Vert\uu-\widetilde{\uu}\Vert_\infty+\Vert D(-\mathcal{L})[\uu_0]^{-1}\Vert_\infty\Vert\bb-\widetilde{\bb}\Vert_\infty \nonumber.
\end{align}	
Hence we have the continuity given by
\begin{align}\label{Tcont3}
    \Vert\uu - \widetilde{\uu}\Vert_\infty \leq 2 \Vert D(-\mathcal{L})[\uu_0]^{-1}\Vert_\infty\Vert\bb-\widetilde{\bb}\Vert_\infty.
\end{align}	
Let $\bb(t)$ be a continuous map 
$\bb: [0, T] \to B\left(\bb_0, {\frac{R}{2 \norm{D(-\LL)[\uu_0]^{-1}}_\infty}}\right)
$ with $\bb(0) = \bb_0$.
Theorem \ref{thm:exitence-of-perturbation} implies that for any $t \in (0, T]$ the solution $\uu(t)$ to equation \eqref{eq:pdQuasi} belongs to the ball $B(\uu_0, R))$. Step II now follows from the continuity \eqref{Tcont3} of the map $T_{\bb(t) - \bb_0}$ on $t$.
Theorem \ref{thm:exitence-of-perturbation} now follows from steps I and II.
\qed

The proof of Theorem \ref{thm:Frechetderiv} now follows from observations made in the proof of Theorem \ref{thm:exitence-of-perturbation}. 

\proof[Proof of Theorem \ref{thm:Frechetderiv}]
Note that for any $\uu'$, $\uu$ belonging to $\mathcal{V}$ the substitutions $\uu'=\uu_0$ and $\uu-\uu'=\Delta\uu$  in  \eqref{eq:F-diff}   give
\begin{align}
    \label{eq:F-diffR}
    \begin{split}
     \Vert -\LL(\uu_0+\Delta\uu) + \LL(\uu_0) - & D(-\LL)[ \uu_0] \Delta\uu\Vert_\infty\\
    & \leq C(\epsilon, g, J) \norm{\Delta\uu}_{\infty} \int\limits_{0}^{1} \norm{\uu_t - \uu_0}_{\infty}dt.
    \end{split}
\end{align}	
Equation \eqref{freshet} of the Theorem \ref{thm:Frechetderiv}  becomes evident noting that  $0\leq\Vert \uu_t - \uu_0 \Vert_\infty\leq\Vert \Delta\uu \Vert_\infty$. Equation \eqref{continuous} follows on writing
\begin{align*}
    &\Vert D(\LL)[\uu+\boldsymbol{\delta}]\Delta\uu-D(\LL)[\uu]\Delta\uu\Vert_\infty\nonumber\\
    & = \int\limits_{H_\epsilon(\xx) \cap \Omega}^{} \frac{J^\epsilon(\abs{\yy - \xx} )}{\epsilon^{d+1} w_d}  \left[ g''  \left( \sqrt{\abs{\yy  - \xx} } S\left(\yy, \xx,  \uu+\boldsymbol{\delta} \right) \right) - g''\left(\sqrt{\abs{\yy -\xx} } S(\yy, \xx, \uu) \right) \right]  \\
    &\quad\quad\quad S(\yy, \xx, \Delta\uu ) \ee_{\yy - \xx} d\yy  \\
    & = \int\limits_{H_\epsilon(\xx) \cap \Omega}^{} \frac{J^\epsilon(\abs{\yy - \xx} )}{\epsilon^{d+1} w_d}  \int\limits_{0}^{1} g'''(\sqrt{|\yy-\xx|}S(\yy,\xx,\uu+t\boldsymbol{\delta}) \sqrt{\abs{\yy - \xx} } S(\yy, \xx,\boldsymbol{\delta})  dt\ 
    \\ & \quad\quad\quad
     S(\yy, \xx, \Delta\uu) \ee_{\yy - \xx} d\yy
\end{align*}
and estimating as in \eqref{eq:F-diff}.
\qed

\section{Stability and Invertibility}%
\label{sec:invertibilityI}
In this section we give the proof of Theorem \ref{thm:stabinvI}. 
We begin with useful properties and necessary observations. Denote the characteristic function $\chi$ of the set $\Omega$ and $\chi_\epsilon$  for the set given by the ball of radius $\epsilon$ with center $0$ denoted by $H_{\epsilon}(0)$. Here the characteristic functions take the value $1$ for points inside the set and zero outside. Set $\chi(\yy,\xx)=\chi(\yy)\chi(\xx)\chi_{\epsilon}(|\yy-\xx|)$ and for $\uu\in\overline{\mathcal{V}}^2$ write
\begin{align}\label{rho}
    \rho(\yy,\xx,S(\yy,\xx,\uu)) = \frac{\chi(\yy,\xx)J^\epsilon(\abs{\yy - \xx})}{\epsilon^{d+1} \omega_d\abs{\yy - \xx}} g''(\sqrt{|\yy-\xx|}S(\yy,\xx,\uu))),
\end{align}
$d=2,3$. Here $\rho(\yy,\xx,S(\yy,\xx,\uu))$ changes sign depending on the factor $g''(r)$ with $r=S(\yy,\xx,\uu)$, see for example figure \ref{ConvexConcavea}. One has the estimate
\begin{align}\label{rhoest}
    |\rho(\yy,\xx,S(\yy,\xx,\uu))|\leq \frac{C\chi(\yy,\xx)}{\epsilon^{d+1} \omega_d\abs{\yy - \xx}}, \qquad C=g''(0)\max_{\yy\in H_1(0)}\{J(|\yy|)\}.
\end{align}
where $max_{r\in \mathbb{R}} \{g''(r)\}$ = $g''(0)$.
One readily verifies the interchange symmetry 
\begin{align}
\label{eq:interchange}
\rho(\yy,\xx,S(\yy,\xx,\uu))=\rho(\xx,\yy,S(\xx,\yy,\uu))
\end{align}
 and  
 \begin{align}
\label{eq:positivity}
0<\rho(\yy,\xx,S(\yy,\xx,\uu))
\end{align}
for strains $S(\yy,\xx,\uu)$ inside the strength domain (see Definition \ref{strength defn}). On the boundary of the strength domain we have
$$\rho(\yy,\xx,S(\yy,\xx,\uu))=0 \hbox{ when } S(\yy,\xx,\uu)={r^c}/{\sqrt{|\yy-\xx|}}$$ and $$\rho(\yy,\xx,S(\yy,\xx,\uu))=0 \hbox{ when }S(\yy,\xx,\uu)=-{r^e}/{\sqrt{|\yy-\xx|}}.$$
Combining equation \eqref{eq:derivative} and  \eqref{rho} we express $ D(\LL)[\uu]  \ww $ as
\begin{align}\label{rhoD2}
    D(\LL)[\uu]  \ww = -\int\limits_{ \Omega}^{} \rho(\yy,\xx,S(\yy,\xx,\uu))\left(\ww(\yy) - \ww(\xx) \cdot \ee_{\yy - \xx}\right)  \ee_{\yy - \xx}  d\yy.
\end{align}

We have that 
\begin{lemma}
\label{thm:symmetricv}
Given $\uu\in\mathcal{V}$, $D(\mathcal{L})[\uu]$ is a symmetric bounded operator on $L^2(\Omega;\mathbb{R}^d)$.
\end{lemma}
The proof of this theorem is given in section \ref{sec:symmbounded}.

The operator, $D(\mathcal{L})[\uu](\ww):\overline{\mathcal{V}}^2\rightarrow \overline{\mathcal{V}}^2$  can be split into two parts and is written
\begin{equation}
    \label{decomp}
    D(\mathcal{L})[\uu](\ww)=\mathbb{K}(\ww)+\A(\ww).
\end{equation}
Here 
\begin{equation}
\label{operTORA}
\mathbb{A}(\ww)=\mathbb{A}(\xx)\ww(\xx)
\end{equation}
where $\mathbb{A}(\xx)$,   is the stability tensor given by Definition \ref{stabilitytensorload}. Hence the operator $\mathbb{A}$ is symmetric on $L^2(\Omega;\mathbb{R}^d)$. It is also bounded, this follows from \eqref{rhoest} and the change of variable $\yy-\xx=\xi$. The operator $\mathbb{K}$ is given by 
\begin{equation}
    \label{decompK}
    \mathbb{K}(\ww)=-\sum_{j=1}^d\int_\Omega\,K_{ij}(\xx,\yy)\ww_j(\yy)\,d\yy,
\end{equation}
with kernel $K_{ij}(\xx,\yy)$ in $L^2(\Omega\times\Omega;{Sym}^{d\times d})$,
\begin{equation}
    \label{decompkK}
    K_{ij}(\xx,\yy)=\rho(\yy,\xx,S(\yy,\xx,\uu))\ee_i\ee_j,
\end{equation}
where ${{Sym}}^{d\times d}$ is the space of $d\times d$ symmetric matrices. 
From Theorem \ref{thm:symmetricv} and the symmetry and boundedness of $\mathbb{A}$ it is immediate that $\mathbb{K}$ is bounded  and symmetric on $L^2(\Omega;\mathbb{R}^d)$. Additionally since $\rho(\yy,\xx,S(\yy,\xx,\uu)$  is in $L^2(\Omega\times\Omega;{Sym}^{d\times d})$ one sees that $\mathbb{K}$ is expressed as a limit of rank one operators so $\mathbb{K}$ is compact on $L^2(\Omega;\mathbb{R}^d)$ see, e.g, \cite{Folland}.
The decomposition given by \eqref{decomp} will be used in the proof Theorem \ref{thm:stabinvI} given in Section \ref{subsec:invertibilityIstep1} below.

The proof of Theorem \ref{thm:stabinvI} proceeds in two steps. The first step assumes the map $D(\LL)[\uu]$ satisfies the following assumptions: 
i) there exists a $\gamma>0$ for which\\
$\mathbb{A}[\uu]-\gamma\mathbb{I}>0$, and ii) {$D(\LL)[\uu]$ is a symmetric bounded linear map on $\overline{\mathcal{V}}^2 \subset  L^2(\Omega;\mathbb{R}^d)$}, { with} $Ker\{D(\LL)[\uu]\}=\{\boldsymbol{0}\}$. With these assumptions it is shown that $D(\LL)[\uu]$ maps $\mathcal{V}$ onto itself so $D(\LL)[\uu]^{-1}$ exists. Additionally we use the hypotheses to show that the inverse is  bounded. These hypotheses are special cases of more general hypotheses stated in in theorem \ref{esistence of inverse in strength domain} and established in following subsection. The proof is completed in the second step where it is shown that these assumptions are satisfied when $\uu$ is strictly contained in the strength domain.
The first step is given in section \ref{subsec:invertibilityIstep1} and the second step is given in section \ref{sec:symmbounded}.

\subsection{Step 1 of proof of Theorem \ref{thm:stabinvI}}%
\label{subsec:invertibilityIstep1}
In this section we establish the following theorem:
\begin{thm}\label{esistence of inverse in strength domain}
{If $D(\LL)[\uu]$ is a symmetric bounded linear map on $\overline{\mathcal{V}}^2 \subset  L^2(\Omega;\mathbb{R}^d)$},  { with} $Ker\{D(\LL)[\uu]\}=\{\boldsymbol{0}\}$ and there is a ${\gamma\not=0}$ such that {$\mathbb{A}^2[\uu]\geq \gamma^2\mathbb{I}$}, then $D(\mathcal{L})[\uu]^{-1}$ exists as a bounded linear map on $\mathcal{V}$ with with respect to the $L^\infty(\Omega;\mathbb{R}^d)$ norm.
\end{thm}
The condition $\mathbb{A}^2[\uu]-\gamma^2\mathbb{I}>0$ is equivalent to saying that all eigenvalues of $\mathbb{A}[\uu]$ lie outside an interval about $0$. This hypothesis includes the case $\mathbb{A}[\uu]-\gamma\mathbb{I}>0$ for $\gamma>0$ used in the proof of Theorem \ref{thm:stabinvI}.

In what follows we denote the range of the map $D(\LL)[\uu]$ on $\overline{\mathcal{V}}^2$  by $D(\LL)[\uu](\overline{\mathcal{V}}^2)$.

The first step in accomplishing the goal of this section is to establish the inequality  given by
\begin{lemma}\label{8}
Given $\uu\in\mathcal{V}$ if $D(\LL)[\uu]$ is a bounded, symmetric linear map on ${\overline{\mathcal{V}}^2 \subset } L^2(\Omega;\mathbb{R}^d)$, { with} $Ker\{D(\LL)[\uu]\}=\{\boldsymbol{0}\}$ and there exists ${\gamma\not=0}$ such that {$\mathbb{A}^2[\uu]\geq \gamma^2\mathbb{I}$} 
then there exists a positive constant $K$ independent of $\ww\in \overline{\mathcal{V}}^2\setminus \{\boldsymbol{0}\}$ such that
    \begin{align} \label{inequality}
        \Vert D(\mathcal{L})[\uu](\ww)\Vert_2\geq K\Vert\ww\Vert_2,
    \end{align}
for all $\ww\in \overline{\mathcal{V}}^2\setminus \{\boldsymbol{0}\}$, hence $D(\LL)[\uu]^{-1}$ exists as an $L^2(\Omega;\mathbb{R}^d)$ bounded map on $\overline{\mathcal{V}}^2$.
\end{lemma}
\begin{proof}
First denote the operator norm of $D(\mathcal{L})[\uu]$ in $L^2(\Omega;\mathbb{R}^d)$ by $\normiii{D(\mathcal{L})[\uu]}_2$ and  $\normiii{D(\mathcal{L})[\uu]}_2<\infty$ from hypothesis. By Theorem \ref{thm:symmetricv} $D(\mathcal{L})[\uu]$ is  symmetric on $\overline{\mathcal{V}}^2$ we have
\begin{align}\label{symm}
       \overline{\mathcal{V}}^2= \overline{D(\LL)[\uu](\overline{\mathcal{V}}^2)}^2\oplus Ker\{D(\mathcal{L})[\uu]\},
    \end{align}
    here we have denoted the $L^2(\Omega;\mathbb{R}^d)$ closure of a set $S$ by $\overline{S}^2$.
We now prove the theorem by contradiction. Suppose there exists a sequence $K_j>0$ such that $\lim_{j\rightarrow 0}K_j= 0$, $\ww_{K_j}\in D(\LL)(\overline{\mathcal{V}}^2)$, $\Vert\ww_{K_j}\Vert_2=1$ and 
\begin{align*}
        \Vert D(\mathcal{L})[\uu](\ww_{K_j})\Vert_2\leq K_j.
    \end{align*}

Since $\Vert\ww_{k_j}\Vert_2=1$ there is a subsequence also denoted by $\{\ww_{K_j}\}$ that is weakly converging to $\ww$ in $\overline{\mathcal{V}}^2$. Now we show that $\ww=\boldsymbol{0}$. Denote the $L^2(\Omega;\mathbb{R}^d)$ inner product by $(\cdot,\cdot)$.
Since $D(\mathcal{L})[\uu]$ is a symmetric linear operator on $\overline{\mathcal{V}}^2$ one has for any $\Phi\in\overline{\mathcal{V}}^2$ 
\begin{align*}
        |(D(\mathcal{L})[\uu](\Phi),\ww_{K_j})| &=|(\Phi,D(\mathcal{L})[\uu](\ww_{K_j}))|\\
        &\leq \Vert\Phi\Vert_2\Vert D(\mathcal{L})[\uu](\ww_{K_j})\Vert_2.
    \end{align*}
Then since $\ww_{K_j}\rightharpoonup \ww$ in $L^2(\Omega;\mathbb{R}^d)$ it follows from the estimate above and the symmetry of $D(\mathcal{L})[\uu]$ that
\begin{align*}
        0=(D(\mathcal{L})[\uu](\Phi),\ww) = (\Phi,D(\mathcal{L})[\uu](\ww)),
    \end{align*}
for all test functions $\Phi$ so $D(\mathcal{L})[\uu](\ww)=0$ almost everywhere so $\ww\in Ker\{D(\LL)[\uu]\}$, hence $\ww=\boldsymbol{0}$,
by the hypothesis. 

Now it is shown that $\ww_{k_j}\rightarrow \boldsymbol{0}$ in the strong topology of $L^2(\Omega;\mathbb{R}^d)$, this will give the contradiction since $\Vert \ww_{K_j}\Vert_2=1$.  Recall that $D(\mathcal{L})[\uu](\ww_{K_j})=\mathbb{K}[\uu]\ww_{K_j}+\mathbb{A}[\uu]\ww_{K_j}$ where  $\mathbb{K}[\uu]$ is a compact operator on $L^2(\Omega;\mathbb{R}^d)$ and $\mathbb{A}[\uu]^T(\xx)=\mathbb{A}[\uu](\xx)$. Since $\ww_{K_j}$ goes weakly to zero we can pass to a subsequence $\mathbb{K}[\uu]\ww_{K_j}$ converging to zero the $L^2(\Omega;\mathbb{R}^d)$ norm. We write
\begin{align}\label{triangle}
   \Vert\mathbb{A}[\uu]\ww_{K_j}\Vert_2&=\Vert D(\mathcal{L})[\uu](\ww_{K_j})-\mathbb{K}[\uu]\ww_{K_j}\Vert_2 \nonumber\\
   &\leq \Vert D(\mathcal{L})[\uu](\ww_{K_j})\Vert_2+\Vert\mathbb{K}[\uu]\ww_{K_j}\Vert_2,
\end{align}
and take limits to see that $\lim_{K_j\rightarrow 0} \Vert\mathbb{A}[\uu]\ww_{K_j}\Vert_2=0$. The hypothesis and the symmetry of  $\mathbb{A}$ gives
\begin{align}\label{lowerbound}
&\gamma^2\Vert\ww_{K_j}\Vert_2^2\leq([\mathbb{A}[\uu]]^2\ww_{K_j},\ww_{K_j})=(\mathbb{A}[\uu]\ww_{K_j},\mathbb{A}[\uu]\ww_{K_j})=\Vert\mathbb{A}[\uu]\ww_{K_j}\Vert_2^2.
\end{align}
Applying \eqref{lowerbound} shows that $\lim_{K_j\rightarrow 0}\Vert\ww_{K_j}\Vert_2=0$ and we get the contradiction establishing the inequality \eqref{inequality}.
\end{proof}
Since $Ker\{D(\LL)[\uu]\}=\{\boldsymbol{0}\}$ we see that $D(\mathcal{L})[\uu]$ is one to one. To establish that $D(\mathcal{L})[\uu]$ is onto, note that inequality \eqref{inequality} implies that is a closed operator so  $D(\mathcal{L})[\uu](\overline{\mathcal{V}}^2)=\{\ww\in\overline{\mathcal{V}}^2\perp Ker\{D(\mathcal{L})[\uu]\}\}$.
So the inverse exists and is bounded from \eqref{inequality}.

The next step is to show that
$D(\mathcal{L})[\uu]^{-1}$ exists on $\mathcal{V}$ as a bounded linear map in the $L^\infty(\Omega;\mathbb{R}^d)$ norm.
Denote the kernel of the map $D(\LL)[\uu]$ on $\overline{\mathcal{V}}^2$  by $Ker\{D(\LL)[\uu](\overline{\mathcal{V}}^2)\}$ and the kernel of the map $D(\LL)[\uu]$ on ${\mathcal{V}}$  by $Ker\{D(\LL)[\uu]({\mathcal{V}})\}$. Note $Ker\{D(\LL)[\uu]({\mathcal{V}})\}\subset Ker\{D(\LL)[\uu](\overline{\mathcal{V}}^2)\}$
so $Ker\{D(\mathcal{L})[\uu](\mathcal{V})\}\}=0$. So $D(\LL)[\uu]$ is one to one map from $\mathcal{V}$ into itself. Next we show that $D(\LL)[\uu]$ is onto $\mathcal{V}$.

To simplify notation we write $\mathbb{K}[\uu]=\mathbb{K}$ and $\mathbb{A}[\uu]=\mathbb{A}$.
Observe that $\vv \in D(\LL)[\uu](\mathcal{V})$ implies $\vv \in D(\LL)[\uu](\overline{\mathcal{V}}^2)$. So from Theorem \ref{8}  there exists a unique $\ww \in \overline{\mathcal{V}}^2$ such that
\begin{align}\label{onto}
    \vv = D(\LL)[\uu]\ww  \text{ in } \overline{\mathcal{V}}^2.
\end{align}	
To establish surjectivity we show that $\norm{\ww}_{\infty} \le C\norm{\ww}_2+\norm{\vv}_\infty$ for some constant independent of $\ww$. 
Writing  $D(\LL)[\uu](\ww)=\mathbb{K}+\mathbb{A}$ gives
\begin{align*}
    \norm{\vv}_{\infty} 
    & = 
    \norm{D(\LL)[\uu]\ww}_{\infty} = \norm{\mathbb{K}(\ww)(\xx) + \A(\xx) \ww(\xx) }_{\infty}
\end{align*}	

Since $\A(\xx)$ is a self-adjoint matrix valued field, we have for all $\boldsymbol{\eta}\in\mathbb{R}^d$
\begin{align*}
    \boldsymbol{\eta}^T \A^2(\xx) \boldsymbol{\eta} \ge \gamma^2 \abs{\boldsymbol{\eta}}^2.
\end{align*}

Since all the eigenvalues of $\A$ {are non zero}, 
$\A$ is invertible. Therefore,
\begin{align}
    \label{eq:twocases}
    &\abs{\mathbb{K}(\ww)(\xx) + \A(\xx) \ww(\xx)}
     = \left[\left(\mathbb{K}(\ww)(\xx) +\A(\xx) \ww(\xx)\right) \cdot 
    \left(\mathbb{K}(\ww)(\xx) + \A(\xx) \ww(\xx)\right)\right]^{\frac{1}{2}} \nonumber \\
    & = \left[\A(\xx) \left( \A^{-1} \mathbb{K}(\ww)(\xx) + \ww(\xx) \right)  \cdot 
    \A(\xx) \left( \A^{-1} \mathbb{K}(\ww)(\xx) + \ww(\xx) \right) \right]^\half \nonumber\\
    & = \left[ \A^2(\xx) \left(\A^{-1}\mathbb{K}(\ww) (\xx) + \ww(\xx)\right) \cdot \left(\A^{-1}\mathbb{K}(\ww) (\xx) + \ww(\xx)\right) \right]^\half \nonumber	\\
    & \ge {|\gamma|} \abs{\A^{-1}\mathbb{K}(\ww) (\xx) + \ww(\xx)}.
\end{align}	

Applying the reverse triangle inequality gives
\begin{align}
\label{Linfinity-L 2}
    \norm{\vv}_\infty=\norm{\mathbb{K}(\ww) + \A \ww}_{\infty}
     \ge {|\gamma|} \norm{\A^{-1}\mathbb{K}(\ww)  + \ww}_{\infty}
     \ge {|\gamma|} \abs{\norm{\A^{-1}\mathbb{K}(\ww) }_{\infty} - \norm{\ww}_{\infty}}.
\end{align}	

Now we have two cases. Suppose first that
\begin{align}
    \label{eq:case1}
    \norm{\A^{-1} \mathbb{K}(\ww)}_{\infty} \ge \norm{\ww}_{\infty}.
\end{align}	
We apply the Cauchy-Schwartz inequality and change of integration variable to get
\begin{align*}
    \abs{\mathbb{K}(\ww)(\xx)} \le \int\limits_{D\cap H_\epsilon(\xx)}^{} \frac{C}{\epsilon^{d+1}\omega_3\abs{\yy - \xx}} \abs{\ww(\yy)} d\yy 
    \le 
   C \norm{\ww}_{L^2(\Omega;\mathbb{R}^d)},
\end{align*}	
so
\begin{align}\label{eq:crucial}
    \norm{\mathbb{K}(\ww)}_{\infty} \le C\norm{\ww}_{L^2(\Omega;\mathbb{R}^d)},
\end{align}	
and equation \eqref{eq:case1} implies
\begin{align}\label{upperL2}
    \norm{\ww}_{\infty} &\le \norm{\A^{-1} \mathbb{K}(\ww)}_{\infty} \le |\gamma|^{-1} \norm{\mathbb{K}(\ww)}_{\infty} \le C |\gamma|^{-1} \norm{\ww}_2,
\end{align}	
where the third inequality follows from \eqref{eq:crucial} and we see $\ww\in \mathcal{V}$.
If instead we have 
\begin{align*}
    \norm{\A^{-1} \mathbb{K}(\ww)}_{\infty} < \norm{\ww}_{\infty},
\end{align*}	
then from equation \eqref{Linfinity-L 2} we have
\begin{align*}
    \norm{\vv}_{\infty} \ge |\gamma| \left( \norm{\ww}_{\infty} - \norm{\A^{-1}\mathbb{K}(\ww)}_{\infty} \right) > 0.
\end{align*}	
So,
\begin{align}\label{case2}
    |\gamma| \norm{\ww}_{\infty} 
    \le \norm{\vv}_{\infty} &+ |\gamma| \norm{\A^{-1} \mathbb{K}(\ww)}_{\infty}
    \le \norm{\vv}_{\infty} + \norm{\mathbb{K}(\ww)}_{\infty}\\
   & \le \Vert\vv\Vert_{\infty} +C  \norm{\ww}_2,
\end{align}	
where the third inequality follows from \eqref{eq:crucial} and we see $\ww\in \mathcal{V}$. This establishes the surjectivity and we conclude $D(\LL)[\uu]^{-1}$ exists on $\mathcal{V}$.

Now we can easily go further using \eqref{upperL2} and \eqref{case2}  to show that the inverse is bounded.
%
To see this we continue the two cases \eqref{upperL2} and \eqref{case2} making use of Lemma \ref{8}.
Continuing inequality \eqref{upperL2}
we get 
\begin{align}\label{upperL22}
    \norm{\ww}_{\infty} &\le \norm{\A^{-1} \mathbb{K}(\ww)}_{\infty} \le {|\gamma|}^{-1} \norm{\mathbb{K}(\ww)}_{\infty} \le C {|\gamma|}^{-1} \norm{\ww}_2\nonumber\\
    &\le C {|\gamma|}^{-1}\Vert D(\LL)[\uu]\ww\Vert_2\le \sqrt{|\Omega|} C {|\gamma|}^{-1}\norm{D(\LL)[\uu]\ww}_{\infty},\nonumber
\end{align}	
where the second to last inequality follows from Lemma \ref{8},  and the first case delivers the desired inequality.
For the second case
\begin{align*}
    {|\gamma|} \norm{\ww}_{\infty} 
    \le \norm{\vv}_{\infty} &+ {|\gamma|} \norm{\A^{-1} \mathbb{K}(\ww)}_{\infty}
    \le \norm{\vv}_{\infty} + \norm{\mathbb{K}(\ww)}_{\infty}\nonumber\\
    &\le \norm{\vv}_{\infty} +C  \norm{\ww}_2\nonumber\\
    &\le \norm{D(\LL)[\uu]\ww}_{\infty} +C\norm{D(\LL)[\uu]\ww}_{\infty}\nonumber\\
    &=(1+C)\norm{D(\LL)[\uu]\ww}_{\infty}\nonumber,
\end{align*}	
where the last inequality follows as before  from Lemma \ref{8} and the second case delivers the desired inequality.
Thus we have shown $D(\LL)[\uu]^{-1}$ exists as a bounded linear transform over $\mathcal{V}$.
This proves \Cref{esistence of inverse in strength domain}.

\subsection{{Step 2 of proof of Theorem \ref{thm:stabinvI}}}
\label{sec:symmbounded}
In this step we show that $D(\LL)[\uu]$ satisfies all the hypotheses of theorem \ref{esistence of inverse in strength domain} 
 when $\uu$ is strictly inside the strength domain.
We begin by showing $D(\LL)[\uu]$ is symmetric and bounded in $L^2(\Omega;\mathbb{R}^d)$. 
\begin{lemma}\label{thm:symmet} Given $\uu\in\mathcal{V}$,
    $D(\LL)[\uu]$ is symmetric on $L^2(\Omega;\mathbb{R}^d)$.
\end{lemma}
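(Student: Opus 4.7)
The proof is a standard ``Fubini plus swap'' argument that exploits the symmetry of the double integral over bond pairs. The plan is to rewrite each pairing $(D(\LL)[\uu]\vv,\ww)_{L^2}$ and $\int_\Omega\LL(\uu)\cdot\ww\,d\xx$ as a double integral over the symmetric set $\{(\xx,\yy)\in\Omega\times\Omega:|\yy-\xx|<\epsilon\}$, then average with the version obtained by exchanging $\xx\leftrightarrow\yy$. Two ingredients make this work: the strain is invariant under exchange, i.e.\ $S(\xx,\yy,\uu)=S(\yy,\xx,\uu)$, because the two minus signs from $\uu(\yy)-\uu(\xx)$ and from $\ee_{\xx-\yy}=-\ee_{\yy-\xx}$ cancel; and the scalar kernels $J^\epsilon(|\yy-\xx|)$ and $\rho(\yy,\xx,S(\yy,\xx,\uu))$ (cf.~\eqref{eq:interchange}) are manifestly symmetric in $(\xx,\yy)$.

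First I handle $D(\LL)[\uu]$. Using \eqref{rhoD2}, write
\begin{align*}
(D(\LL)[\uu]\vv,\ww)_{L^2}=-\int_\Omega\!\int_{H_\epsilon(\xx)\cap\Omega}\rho(\yy,\xx,S(\yy,\xx,\uu))\bigl((\vv(\yy)-\vv(\xx))\cdot\ee_{\yy-\xx}\bigr)\bigl(\ee_{\yy-\xx}\cdot\ww(\xx)\bigr)\,d\yy\,d\xx.
\end{align*}
Applying Fubini and relabelling $\xx\leftrightarrow\yy$, the two sign flips from $\ee_{\xx-\yy}=-\ee_{\yy-\xx}$ cancel, and the symmetries of $\rho$ and the bond set convert $\ww(\xx)$ into $\ww(\yy)$ while replacing $\vv(\yy)-\vv(\xx)$ by $\vv(\xx)-\vv(\yy)$. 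Adding the original and swapped expressions and dividing by two gives
\begin{align*}
(D(\LL)[\uu]\vv,\ww)_{L^2}=\frac{1}{2}\int_\Omega\!\int_{H_\epsilon(\xx)\cap\Omega}\rho(\yy,\xx,S(\yy,\xx,\uu))\,|\yy-\xx|^2\,S(\yy,\xx,\vv)\,S(\yy,\xx,\ww)\,d\yy\,d\xx,
\end{align*}
where I used $(\vv(\yy)-\vv(\xx))\cdot\ee_{\yy-\xx}=|\yy-\xx|S(\yy,\xx,\vv)$. This representation is manifestly symmetric under $\vv\leftrightarrow\ww$, which is precisely the claim $(D(\LL)[\uu]\vv,\ww)_{L^2}=(\vv,D(\LL)[\uu]\ww)_{L^2}$; absolute convergence of the double integral follows from the bound \eqref{rhoest}, and by \eqref{bdoperator} the operator is already known to be bounded on $L^\infty$ and hence on $L^2$ after using $|\Omega|<\infty$.

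For $\LL(\uu)$ I run exactly the same argument starting from \eqref{eq: force}, which is nonlinear in $\uu$ but linear in the test slot $\ww$:
\begin{align*}
\int_\Omega\LL(\uu)\cdot\ww\,d\xx=-\int_\Omega\!\int_{H_\epsilon(\xx)\cap\Omega}\frac{2J^\epsilon(|\yy-\xx|)}{\epsilon^{d+1}\omega_d\sqrt{|\yy-\xx|}}\,g'\!\bigl(\sqrt{|\yy-\xx|}S(\yy,\xx,\uu)\bigr)\,\ee_{\yy-\xx}\cdot\ww(\xx)\,d\yy\,d\xx.
\end{align*}
Swapping $\xx\leftrightarrow\yy$, using $S(\xx,\yy,\uu)=S(\yy,\xx,\uu)$ and $\ee_{\xx-\yy}=-\ee_{\yy-\xx}$, and averaging yields
\begin{align*}
\int_\Omega\LL(\uu)\cdot\ww\,d\xx=\frac{1}{2}\int_\Omega\!\int_{H_\epsilon(\xx)\cap\Omega}\frac{2J^\epsilon(|\yy-\xx|)}{\epsilon^{d+1}\omega_d}\,g'\!\bigl(\sqrt{|\yy-\xx|}S(\yy,\xx,\uu)\bigr)\,\sqrt{|\yy-\xx|}\,S(\yy,\xx,\ww)\,d\yy\,d\xx.
\end{align*}
This symmetric-in-$(\xx,\yy)$ form is what ``symmetry of $\LL(\uu)$'' refers to: it depends on $\ww$ only through the strain $S(\yy,\xx,\ww)$, which in particular vanishes when $\ww\in\Pi$ by Proposition~\ref{kernelstrain}, giving the identity $\int_\Omega\LL(\uu)\cdot\ww\,d\xx=0=\int_\Omega\uu\cdot\LL(\ww)\,d\xx$ used in Lemma~\ref{luinv}. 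The only real obstacle is keeping sign bookkeeping straight through the two exchanges ($\ee_{\yy-\xx}\to-\ee_{\yy-\xx}$ and $\uu(\yy)-\uu(\xx)\to-(\uu(\yy)-\uu(\xx))$); once one verifies that these cancel inside $S$ but combine additively in the $\ww(\yy)-\ww(\xx)$ factor, both claims fall out immediately.
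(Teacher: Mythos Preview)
Your argument is correct and follows essentially the same route as the paper: write the pairing as a double integral over bond pairs, swap $\xx\leftrightarrow\yy$ via Fubini, and exploit the interchange symmetry of $\rho$ (respectively of the scalar kernel in $\LL$) together with $\ee_{\xx-\yy}\otimes\ee_{\xx-\yy}=\ee_{\yy-\xx}\otimes\ee_{\yy-\xx}$; the paper swaps one of two split terms while you average the original and swapped expressions, which are equivalent manipulations. One small slip: $L^\infty$-boundedness together with $|\Omega|<\infty$ does not by itself give $L^2$-boundedness of $D(\LL)[\uu]$ (the paper proves that separately in Lemma~\ref{bl2}), though boundedness is not needed for the symmetry identity; on the other hand, you are more careful than the paper about the nonlinear operator $\LL$, correctly observing that the literal identity $\int_\Omega\LL(\uu)\cdot\ww=\int_\Omega\uu\cdot\LL(\ww)$ need not hold for general $\uu,\ww$ and that what is actually obtained---and what Lemma~\ref{luinv} requires---is the strain-symmetric form that vanishes for $\ww\in\Pi$.
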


\proof
To show that 
\begin{align*}
\int\limits_{\Omega}^{} D(\LL)[\uu](\ww) \cdot \vv d\xx
= \int\limits_{\Omega}^{} D(\LL)[\uu](\vv) \cdot \ww d\xx,
\end{align*}	
we appeal to the definition,
\begin{align}
    \label{eq:split}
    \begin{split}
-\int\limits_{\Omega}^{} D(\LL)[\uu](\ww) \cdot \vv d\xx
    & = \int\limits_{\Omega}^{} \int\limits_{\Omega}^{} \rho(\yy ,  \xx, S(\yy, \xx, \uu))   \ww(\yy) \cdot \ee_{\yy - \xx} \vv(\xx) \cdot \ee_{\yy - \xx} d\yy d\xx
    \\
    & - \int\limits_{\Omega}^{} \int\limits_{\Omega}^{} \rho(\yy ,  \xx, S(\yy, \xx, \uu))   \ww(\xx) \cdot \ee_{\yy - \xx} \vv(\xx) \cdot \ee_{\yy - \xx} d\yy d\xx.
    \end{split}
\end{align}	
Here consider the first term on the right hand side and switch the variable names of $\xx$ and $\yy$. Then
using the interchange symmetry \eqref{eq:interchange} of $\rho$ and $\ee_{\xx - \yy}\otimes \ee_{\xx - \yy}$  in the $\xx$ and $\yy$ variables, together with Fubini's theorem \eqref{eq:split} gives
\begin{align*}
    & \int\limits_{\Omega}^{} \int\limits_{\Omega}^{} \rho(\yy ,  \xx, S(\yy, \xx, \uu)) \ww(\yy) \cdot \ee_{\yy - \xx} \vv(\xx) \cdot \ee_{\yy - \xx} d\yy d\xx
    \\
    & = \int\limits_{\Omega}^{} \int\limits_{\Omega}^{} \rho(\yy ,  \xx, S(\yy, \xx, \uu))   \ww(\xx) \cdot \ee_{\yy - \xx} \vv(\yy) \cdot \ee_{\yy - \xx} d\yy d\xx
\end{align*}	
The result follows on combining the above with the second term on  the right hand side of \eqref{eq:split}.
\qed

Next we show $D(\LL)[\uu]$ is bounded. 
\begin{lemma}\label{bl2} Given $\uu\in\mathcal{V}$ the operator
    $D(\LL)[\uu]$ is bounded in $L^2(\Omega;\mathbb{R}^d)$, i.e., there exits $C > 0$ independent of $\epsilon$ and any $\ww \in L^2(\Omega;\mathbb{R}^d)$ such that
    \begin{align*}
	\norm{D(\LL)[\uu] \ww}_{2} \le\frac{ C}{\epsilon^2} \norm{\ww}_2.
    \end{align*}	
\end{lemma}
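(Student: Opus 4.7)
The plan is to exploit the representation \eqref{rhoD2} together with the uniform bound \eqref{rhoest} on $\rho$ to reduce the statement to an $L^2$ bound for an integral operator with an integrable convolution-type kernel. Starting from \eqref{rhoD2}, taking absolute values, and using $|\ee_{\yy-\xx}|=1$ and \eqref{rhoest} gives the pointwise estimate
\begin{align*}
|D(\LL)[\uu]\ww(\xx)| \le \int_{H_\epsilon(\xx)\cap\Omega} \frac{C\,\chi(\yy,\xx)}{\epsilon^{d+1}\omega_d |\yy-\xx|}\bigl(|\ww(\yy)|+|\ww(\xx)|\bigr)\,d\yy.
\end{align*}

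I would then split this into a diagonal piece and an off-diagonal piece. For the diagonal piece, the factor $|\ww(\xx)|$ comes out of the integral and one computes in spherical coordinates
\begin{align*}
\int_{H_\epsilon(\xx)}\frac{d\yy}{|\yy-\xx|} \;\le\; C\,\epsilon^{d-1},
\end{align*}
so this term contributes at most $\frac{C}{\epsilon^2}|\ww(\xx)|$ pointwise, and hence at most $\frac{C}{\epsilon^2}\|\ww\|_2$ in $L^2$. For the off-diagonal piece, after extending $\ww$ by zero outside $\Omega$, the integral becomes the convolution $(k_\epsilon \ast |\ww|)(\xx)$ with the radial kernel
\begin{align*}
k_\epsilon(\zz)=\frac{C\,\chi_{H_\epsilon(0)}(\zz)}{\epsilon^{d+1}\omega_d|\zz|}.
\end{align*}
A spherical-coordinate computation gives $\|k_\epsilon\|_{L^1(\mathbb{R}^d)}\le C/\epsilon^2$ (the singularity $1/|\zz|$ is integrable in $d=2,3$), so Young's inequality yields $\|k_\epsilon \ast |\ww|\|_2 \le \|k_\epsilon\|_1 \|\ww\|_2 \le \frac{C}{\epsilon^2}\|\ww\|_2$.

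Adding the two contributions, applying Minkowski's inequality in $L^2(\Omega;\mathbb{R}^d)$, and absorbing constants gives the desired bound $\|D(\LL)[\uu]\ww\|_2 \le \frac{C}{\epsilon^2}\|\ww\|_2$, with $C$ depending only on $g''(0)$, $\|J\|_\infty$, and $\omega_d$, hence independent of $\uu$, $\ww$, and $\epsilon$. The only subtle point is the $\epsilon$-uniform integrability of the singular factor $1/|\yy-\xx|$; a direct Cauchy--Schwarz argument applied to $|D(\LL)[\uu]\ww(\xx)|^2$ (using the same integral on both factors) gives an equivalent route via Schur's test, so Young's inequality is the cleanest tool. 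Finally, note that this $L^2$ bound is essentially the $L^2$ analogue of the $L^\infty$ bound already recorded in the proof of Proposition \ref{propn:deriv}, so no hypothesis beyond $\uu\in\mathcal{V}$ is used.
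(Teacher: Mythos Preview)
Your argument is correct and is in fact a bit more direct than the paper's. Both proofs start from the representation \eqref{rhoD2} and the kernel bound \eqref{rhoest}, but they diverge after that. The paper squares first, writing $\norm{D(\LL)[\uu]\ww}_2^2$ as a triple integral in $(\xx,\yy,\zz)$ with the product $\rho(\yy,\xx,\cdot)\rho(\zz,\xx,\cdot)$, then expands $(|\ww(\yy)|+|\ww(\xx)|)(|\ww(\zz)|+|\ww(\xx)|)$ into four terms and bounds each one separately via the elementary inequality $ab\le\tfrac12(a^2+b^2)$, a change of variables, and Young's integral inequality. You instead keep the pointwise estimate on $|D(\LL)[\uu]\ww(\xx)|$, split once into the diagonal contribution $\frac{C}{\epsilon^2}|\ww(\xx)|$ and the convolution $k_\epsilon\ast|\ww|$, and then apply Young's convolution inequality with $\Vert k_\epsilon\Vert_{L^1}\le C/\epsilon^2$. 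This avoids the triple integral and the four-term expansion entirely; it is essentially a Schur-type bound in disguise, and it makes the dependence on $g''(0)$, $\Vert J\Vert_\infty$, and $\omega_d$ transparent. The paper's route has the minor advantage that it never leaves $\Omega$ (no extension by zero is needed), but your extension step is harmless since $\Omega$ is bounded and you only use $L^2$ norms.
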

\proof
Write 
\begin{align*}
    & \norm{D(\LL)[\uu] \ww}_2^2 \\
    & = \int\limits_{\Omega}^{} \abs{\int\limits_{\Omega}^{} \rho(\yy, \xx, S(\yy, \xx, \uu)) \left( \ww(\yy) - \ww(\xx) \right) \cdot \ee_{\yy - \xx} \ee_{\yy - \xx} d\yy}^2 d\xx \\
    & \leq \int\limits_{\Omega}^{} \int\limits_{\Omega}^{} \int\limits_{\Omega}^{} \abs{  \rho(\yy, \xx, S(\yy, \xx, \uu)) \rho(\zz, \xx, S(\zz, \xx, \uu))  \left( \ww(\yy) - \ww(\xx) \right) \cdot \ee_{\yy - \xx}
    }\times
    \\
    \\ & \quad\quad\quad
    \abs{
    \left( \ww(\zz) - \ww(\xx) \right) \cdot \ee_{\zz - \xx}  } d\yy d\zz d\xx 
    \\
    & \le \int\limits_{\Omega}^{} \int\limits_{\Omega}^{} \int\limits_{\Omega}^{}  \rho(\yy, \xx, S(\yy, \xx, \uu)) \rho(\zz, \xx, S(\zz, \xx, \uu))  ( \abs{\ww(\yy)} + \abs{\ww(\xx)} ) ( \abs{\ww(\zz)} + \abs{\ww(\xx)} )  d\yy d\zz d\xx.
\end{align*}	

Expanding the product $( \abs{\ww(\yy)} + \abs{\ww(\xx)} ) ( \abs{\ww(\zz)} + \abs{\ww(\xx)} )$ we apply the inequality $ab \le \frac{1}{2}(a^2 + b^2)$ to the first term to get 
\begin{align*}
    & \int\limits_{\Omega}^{} \int\limits_{\Omega}^{} \int\limits_{\Omega}^{}  |\rho(\yy,\xx,S(\yy,\xx,\uu))||\rho(\zz,\xx,S(\zz,\xx,\uu))|   \abs{\ww(\yy)}  \abs{\ww(\zz)} d\yy d\zz d\xx \\
    & \le \int\limits_{\Omega}^{} \int\limits_{\Omega}^{} \int\limits_{\Omega}^{}  \frac{C\chi(\yy,\xx)}{\epsilon^{d+1} \omega_d\abs{\yy - \xx}}\frac{C\chi(\zz,\xx)}{\epsilon^{d+1} \omega_d\abs{\zz - \xx}} \abs{\ww(\yy)  }^2 d\yy d\zz d\xx\\
    & \le \frac{C}{\epsilon^2} \norm{\ww}_{2}^2,
\end{align*}	
where to pass to the last line we applied \eqref{rhoest}, made the change of variables $\zz-\xx=\epsilon\boldsymbol{\eta}$, $\yy-\xx=\epsilon\boldsymbol{\zeta}$ and switched limits of integration. Here the label $C$ denotes a constant independent of $\epsilon$.


The next terms are
\begin{align*}
    & \int\limits_{\Omega}^{} \int\limits_{\Omega}^{} \int\limits_{\Omega}^{}  \rho(\yy, \xx, S(\yy, \xx, \uu)) \rho(\zz, \xx, S(\zz, \xx, \uu))  \left(\abs{\ww(\yy)}  \abs{\ww(\xx)} + \abs{\ww(\xx)}  \abs{\ww(\zz)}\right) d\yy d\zz d\xx \\
    & = 2 \int\limits_{\Omega}^{} \int\limits_{\Omega}^{} \int\limits_{\Omega}^{}  \rho(\yy, \xx, S(\yy, \xx, \uu)) \rho (\zz, \xx, S(\zz, \xx, \uu)) \abs{\ww(\yy)}  \abs{\ww(\xx)} d\yy d\zz d\xx \\
    & = 2\int\limits_{\Omega}^{} \int\limits_{\Omega}^{} \rho(\yy, \xx, S(\yy, \xx, \uu))  \abs{\ww(\yy)}\abs{ \ww(\xx)}  \left(\int\limits_{\Omega}^{} \rho(\zz, \xx, S(\zz, \xx, \uu)) d\zz\right) d\yy d\xx \\
    &
    \le \frac{C}{\epsilon^2} \norm{\ww}_{2}^2
\end{align*}
where the last inequality follows from Young's integral inequality.
The final term
\begin{align*}
    &  \int\limits_{\Omega}^{} \int\limits_{\Omega}^{} \int\limits_{\Omega}^{}  \rho(\yy , \xx , S(\yy, \xx, \uu)) \rho(\zz , \xx, S(\zz, \xx, \uu))  \abs{\ww(\xx)}^2 d\yy d\zz d\xx \\
    & \le  \int\limits_{\Omega}^{}  \left( \int\limits_{\Omega}^{} \rho(\yy , \xx, S(\yy, \xx, \uu))   d\yy \right) \left( \int\limits_{\Omega}^{} \rho(\zz , \xx, S(\zz, \xx, \uu))d\zz \right) \abs{\ww(\xx)}^2 d\xx \\
    &  \le \frac{C}{\epsilon^2} \norm{\ww}_2^2 
\end{align*}	
The result follows by combining all 3 terms.
\qed
Theorem \ref{thm:symmetricv} follows immediately from Lemmas \ref{thm:symmet} and \ref{bl2}.

First note from \eqref{eq:positivity} that when the strain lies strictly inside the strength domain one has $\rho(\yy,\xx,S(\yy,\xx,\uu))>0$. The goal is to now show that $Ker\{D(\mathcal{L}[\uu]\}=\{\boldsymbol{0}\}$ when $\uu$ lies strictly inside the strength domain. 
We begin by writing an alternate formula for the quadratic form.
\begin{lemma}\label{lemma:quad}
\begin{align}
    \label{eq:du-u}
    \int\limits_{\Omega}^{} D(\LL)[\uu] \ww \cdot \ww d\xx = \frac{1}{2} \int\limits_{\Omega}^{}  \int\limits_{H_\epsilon(\xx)}^{} \rho(\yy , \xx, S(\yy, \xx, \uu)) \abs{ \left( \ww(\yy) - \ww(\xx) \right) \cdot  \ee_{\yy - \xx}}^2 d\yy d\xx
\end{align}	
\end{lemma}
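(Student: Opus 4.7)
The plan is to start from the formula \eqref{rhoD2} for $D(\LL)[\uu]\ww$ and rewrite the bilinear form $\int_\Omega D(\LL)[\uu]\ww \cdot \ww\, d\xx$ via a symmetrization in the variables $\xx$ and $\yy$, using the interchange symmetry \eqref{eq:interchange} of $\rho$ and the evident invariance of $\ee_{\yy-\xx}\otimes\ee_{\yy-\xx}$ under swapping $\xx\leftrightarrow\yy$ (since the minus sign squares away). This is the same trick used in the proof of Lemma~\ref{thm:symmet}; no new machinery is required.

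Concretely, I would first substitute \eqref{rhoD2} to obtain
\begin{align*}
\int_\Omega D(\LL)[\uu]\ww \cdot \ww\, d\xx
= -\int_\Omega \int_\Omega \rho(\yy,\xx,S(\yy,\xx,\uu))\, \bigl(\ww(\yy)-\ww(\xx)\bigr)\cdot\ee_{\yy-\xx}\,\ww(\xx)\cdot\ee_{\yy-\xx}\, d\yy\, d\xx,
\end{align*}
where the integrand is supported on $H_\epsilon(\xx)\cap\Omega$ via the characteristic factors inside $\rho$. Next I would interchange the names of $\xx$ and $\yy$ in the iterated integral via Fubini, noting that $\rho(\yy,\xx,S(\yy,\xx,\uu))=\rho(\xx,\yy,S(\xx,\yy,\uu))$ by \eqref{eq:interchange} and that $\ee_{\yy-\xx}\otimes\ee_{\yy-\xx}=\ee_{\xx-\yy}\otimes\ee_{\xx-\yy}$. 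This shows
\begin{align*}
\int_\Omega\!\int_\Omega \rho\, \bigl(\ww(\yy)-\ww(\xx)\bigr)\cdot\ee_{\yy-\xx}\,\ww(\xx)\cdot\ee_{\yy-\xx}\,d\yy\,d\xx
=\int_\Omega\!\int_\Omega \rho\,\bigl(\ww(\xx)-\ww(\yy)\bigr)\cdot\ee_{\yy-\xx}\,\ww(\yy)\cdot\ee_{\yy-\xx}\,d\yy\,d\xx.
\end{align*}

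Averaging the two identical expressions gives
\begin{align*}
\int_\Omega\!\int_\Omega \rho\, \bigl(\ww(\yy)-\ww(\xx)\bigr)\cdot\ee_{\yy-\xx}\,\ww(\xx)\cdot\ee_{\yy-\xx}\,d\yy\,d\xx
= -\tfrac{1}{2}\int_\Omega\!\int_\Omega \rho\,\bigl|(\ww(\yy)-\ww(\xx))\cdot\ee_{\yy-\xx}\bigr|^{2}\,d\yy\,d\xx,
\end{align*}
and the overall minus sign in front then delivers \eqref{eq:du-u} after restricting the inner integration back to $H_\epsilon(\xx)$ using the support of $\rho$. There is no real obstacle here; the only point to keep in mind is that Fubini requires the absolute integrability of the integrand, which follows from $\ww\in L^2(\Omega,\mathbb{R}^d)$ together with the estimate \eqref{rhoest} on $\rho$ and the local integrability of $|\yy-\xx|^{-1}$ on $H_\epsilon(\xx)$ in dimensions $d=2,3$, exactly as exploited in Lemma~\ref{bl2}.
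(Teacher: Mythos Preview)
Your proof is correct and follows essentially the same approach as the paper: both rely on the interchange symmetry \eqref{eq:interchange} of $\rho$ and of $\ee_{\yy-\xx}\otimes\ee_{\yy-\xx}$ together with Fubini to symmetrize the bilinear form. The only cosmetic difference is that the paper splits the integral into two equal halves and symmetrizes one of them, whereas you symmetrize the whole integral and then average the two equal expressions; these are the same manipulation.
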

From Lemma \ref{lemma:quad} and $\rho(\yy,\xx,S(\yy,\xx,\uu))>0$ it is seen that the quadratic form associated with $D(\LL)[\uu]$ is positive definite on $L^2(\Omega;\mathbb{R}^d)$.
\proof
\begin{align}\label{twoterms}
\begin{split}
    & -\int\limits_{\Omega}^{} D(\LL)[\uu] \ww \cdot \ww d\xx
    \\
    & = 
    \half \int\limits_{\Omega}^{} \int\limits_{H_\epsilon(\xx)}^{} \rho(\yy , \xx, S(\yy, \xx, \uu)) \left( \uu(\yy) - \ww(\xx) \right) \cdot \ee_{\yy - \xx} \ww(\xx) \cdot \ee_{\yy - \xx} d\yy d\xx
    \\
    &
    +\half \int\limits_{\Omega}^{} \int\limits_{H_\epsilon(\xx)}^{} \rho(\yy , \xx, S(\yy, \xx, \uu)) \left( \ww(\yy) - \ww(\xx) \right) \cdot \ee_{\yy - \xx} \ww(\xx) \cdot \ee_{\yy - \xx} d\yy d\xx
\end{split}
\end{align}	
Writing the second term we see that
\begin{align} \label{2ndterm}
     & \half \int\limits_{\Omega}^{} \int\limits_{\Omega}^{}  \rho(\yy , \xx, S(\yy, \xx, \uu)) \left( \ww(\yy) - \ww(\xx) \right) \cdot \ee_{\yy - \xx} \ww(\xx) \cdot \ee_{\yy - \xx} d\yy d\xx
    \nonumber\\
    & = -\half \int\limits_{\Omega}^{} \int\limits_{\Omega}^{}  \rho(\yy , \xx, S(\yy, \xx, \uu)) \left( \ww(\xx) - \ww(\yy) \right) \cdot \ee_{\xx - \yy} \ww(\xx) \cdot \ee_{\xx - \yy}  d\xx d\yy
    \nonumber\\
    & = -\half \int\limits_{\Omega}^{} \int\limits_{\Omega}^{}  \rho(\yy , \xx, S(\yy, \xx, \uu)) \left( \ww(\yy) - \ww(\xx) \right) \cdot \ee_{\yy - \xx} \ww(\yy) \cdot \ee_{\yy - \xx}  d\yy d\xx,
\end{align}	
where the second term follows from exchanging the order of integration, the third follows from rewriting $\ww(\yy)-\ww(\xx)$ the last follows 
a  relabeling inner and outer variables of integration  and the symmetry of $\rho$ and $\ee_{\xx - \yy}\otimes \ee_{\xx - \yy}$ in $\xx$ and $\yy$.
The Lemma follows substituting \eqref{2ndterm} into the second term of the right hand side of  \eqref{twoterms}.
\qed

The next lemma is an adaptation of Lemma 1  of Mengesha-Du \cite{MengeshaDuNonlocal14}.
\begin{lemma}
$
    \int\limits_{\Omega}^{} D(\LL)[\uu]\ww \cdot \ww d\xx = 0 \text{ if and only if } \ww \in \Pi.
    $
\label{lemma:rigid}
\end{lemma}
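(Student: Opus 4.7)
The plan is to reduce the lemma to Proposition \ref{kernelstrain} by using the quadratic form identity of Lemma \ref{lemma:quad} together with the strict positivity of $\rho$ inside the strength domain. Since $\uu$ is strictly inside the strength domain, \eqref{eq:positivity} gives $\rho(\yy,\xx,S(\yy,\xx,\uu))>0$ for a.e.\ pair $(\xx,\yy)$ with $\yy\in H_\epsilon(\xx)\cap\Omega$, so Lemma \ref{lemma:quad} expresses (up to sign) the quadratic form $\int_\Omega D(\LL)[\uu]\ww\cdot\ww\,d\xx$ as the integral of a pointwise nonnegative function.

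First I would dispatch the ``only if'' direction. If $\ww\in\Pi$ then Proposition \ref{kernelstrain} gives $S(\yy,\xx,\ww)=0$ for every admissible pair, i.e.\ $(\ww(\yy)-\ww(\xx))\cdot\ee_{\yy-\xx}=0$ identically. Substituting into the formula of Lemma \ref{lemma:quad} makes the integrand vanish, so the quadratic form is zero.

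For the ``if'' direction I would argue by a standard vanishing argument. Since the integrand in Lemma \ref{lemma:quad} is nonnegative and integrates to zero, it must vanish a.e., hence $\rho(\yy,\xx,S(\yy,\xx,\uu))\,|(\ww(\yy)-\ww(\xx))\cdot\ee_{\yy-\xx}|^2=0$ for a.e.\ $(\xx,\yy)$. The strict positivity of $\rho$ forces $(\ww(\yy)-\ww(\xx))\cdot\ee_{\yy-\xx}=0$ a.e., which is $S(\yy,\xx,\ww)=0$ a.e. Then Proposition \ref{kernelstrain} (in its almost-everywhere version, per Lemma 2 of \cite{DuGunLehZho}) yields $\ww\in\Pi$.

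The main obstacle is the last implication, namely that a.e.\ vanishing of the nonlocal strain forces $\ww$ to be a rigid motion. This is essentially Lemma 1 of Mengesha--Du \cite{MengeshaDuNonlocal14}; it uses the interior cone condition on $\Omega$ (already assumed in Theorem \ref{thm:stabinvI}) so that sufficiently many chord directions $\ee_{\yy-\xx}$ through each point span $\mathbb{R}^d$ and allow one to extract skew-symmetry of the effective gradient. Everything else in the proof is bookkeeping.
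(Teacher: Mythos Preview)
Your proposal is correct and follows essentially the same route as the paper: both invoke Lemma \ref{lemma:quad} together with the strict positivity of $\rho$ inside the strength domain, and then reduce the characterization of the zero set to Lemma 1 of Mengesha--Du \cite{MengeshaDuNonlocal14}. The paper's proof is even terser than yours---it simply cites that lemma after displaying the quadratic form---so your spelled-out vanishing argument and explicit handling of the ``only if'' direction via Proposition \ref{kernelstrain} are fine elaborations of the same idea.
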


\proof
From Lemma \ref{lemma:quad} we have
\begin{align}\label{positivedef}
    \int\limits_{\Omega}^{} D(\LL)[\uu] \ww \cdot \ww d\xx = \frac{1}{2} \int\limits_{\Omega}^{}  \int\limits_{H_\epsilon(\xx)}^{} \rho(\yy , \xx, S(\yy, \xx, \uu)) \abs{ \left( \ww(\yy) - \ww(\xx) \right) \cdot  \ee_{\yy - \xx}}^2 d\yy d\xx
\end{align}	
Since $\rho(\yy,\xx,S(\yy,\xx,\uu))>0$ for $S\uu)$ strictly inside the strength domain the Lemma follows from \eqref{positivedef} and Lemma 1 of \cite{MengeshaDuNonlocal14}.
\qed
\noindent Since $\overline{\mathcal{V}}^2\cap\Pi=\{\boldsymbol{0}\}$ we conclude that $Ker\{D(\mathcal{L})[\uu]\}=\{\boldsymbol{0}\}$ when $\uu$ lies inside the strength domain.  Moreover since $\mathcal{V}\subset \overline{\mathcal{V}^2}$ and we conclude that 
    \begin{equation}\label{stab50infty}
        Ker\{D(\mathcal{L})[\uu]\}=0, \hbox{for all $\uu\in\mathcal{V}$}.
    \end{equation}

Now we show there exists $\gamma>0$ such that $\mathbb{A}[\uu]\geq \gamma\mathbb{I}$.
\begin{lemma}
    \label{lemma:positive-definite}
Given that $\uu$ is strictly inside the strength domain 
    there exists $\gamma > 0$ such that 
    for all $\xx \in \R^d$, $d = 2, 3$,
    \begin{align*}
	\gamma  \I \le \mathbb{A}(\xx)=\int\limits_{\Omega}^{} \rho(\yy , \xx, S(\yy, \xx, \uu)) \ee_{\yy - \xx} \otimes  \ee_{\yy - \xx} d\yy
    \end{align*}	
    where $\I \in \R^{d \times d}$ is the identity matrix.
\end{lemma}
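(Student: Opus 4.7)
The plan is to reduce the quadratic form $\mathbb{A}(\xx)\vv\cdot\vv$ to an angular integral over the unit sphere and then invoke the interior cone condition to produce a positive-definite contribution. For any unit $\vv\in\R^d$,
\[
\mathbb{A}(\xx)\vv\cdot\vv=\int_{\Omega\cap H_\epsilon(\xx)}\rho(\yy,\xx,S(\yy,\xx,\uu))\,(\ee_{\yy-\xx}\cdot\vv)^2\,d\yy.
\]
Because $\uu$ lies strictly inside the strength domain, $\sqrt{|\yy-\xx|}\,S(\yy,\xx,\uu)$ is confined to a closed subinterval of $(r^e,r^c)$ on which $g''\geq c_0>0$. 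Together with positivity of $J$ on a neighborhood of the origin, say $J(r)\geq J_0>0$ for $r\in[0,r_0]$ with $r_0\in(0,1]$, this gives the pointwise lower bound
\[
\rho(\yy,\xx,S(\yy,\xx,\uu))\geq\frac{c_0J_0}{\epsilon^{d+1}\omega_d\,|\yy-\xx|}\qquad\text{whenever }|\yy-\xx|\leq\epsilon r_0.
\]

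Next I would bring in the interior cone condition to keep a set of fixed angular size inside the integration domain. By hypothesis, $\Omega$ contains a spherical cone $C_{\lambda,\theta}(\xx,\ee_\xx)$ with apex $\xx$, axis $\ee_\xx$, aperture $2\theta$, and radius $\lambda$, where $\theta$ and $\lambda$ are fixed and independent of $\xx$. Restricting the integral to $C_{\lambda,\theta}(\xx,\ee_\xx)\cap\{|\yy-\xx|\leq\min(\lambda,\epsilon r_0)\}$ and passing to spherical coordinates $\yy=\xx+r\boldsymbol{\omega}$ with $\boldsymbol{\omega}\in S^{d-1}$,
\[
\mathbb{A}(\xx)\vv\cdot\vv\;\geq\;\frac{c_0J_0}{\epsilon^{d+1}\omega_d}\,\frac{\bigl(\min(\lambda,\epsilon r_0)\bigr)^{d-1}}{d-1}\int_{\Sigma_\theta(\ee_\xx)}(\boldsymbol{\omega}\cdot\vv)^2\,d\sigma(\boldsymbol{\omega}),
\]
where $\Sigma_\theta(\ee_\xx):=\{\boldsymbol{\omega}\in S^{d-1}:\boldsymbol{\omega}\cdot\ee_\xx\geq\cos\theta\}$ is the spherical cap of directions in the cone.

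It remains to bound the angular integral below by $c(\theta)|\vv|^2$ uniformly in $\ee_\xx$ and $\vv$. Rotational symmetry about $\ee_\xx$ forces
\[
M(\ee_\xx):=\int_{\Sigma_\theta(\ee_\xx)}\boldsymbol{\omega}\otimes\boldsymbol{\omega}\,d\sigma(\boldsymbol{\omega})=\alpha(\theta)\,\ee_\xx\otimes\ee_\xx+\beta(\theta)\bigl(\I-\ee_\xx\otimes\ee_\xx\bigr),
\]
and a direct spherical-coordinate computation yields
\[
\alpha(\theta)=|S^{d-2}|\int_0^\theta\cos^2\phi\,\sin^{d-2}\phi\,d\phi,\qquad \beta(\theta)=\frac{|S^{d-2}|}{d-1}\int_0^\theta\sin^{d}\phi\,d\phi,
\]
both strictly positive whenever $\theta\in(0,\pi)$. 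Consequently $M(\ee_\xx)\vv\cdot\vv\geq\min(\alpha(\theta),\beta(\theta))\,|\vv|^2$ independently of $\ee_\xx$, which combined with the previous display produces a constant $\gamma>0$ depending on $\epsilon,\theta,\lambda,r_0,c_0,J_0$ but not on $\xx$ or $\vv$, for which $\mathbb{A}(\xx)\geq\gamma\I$ for all $\xx\in\Omega$.

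The main subtlety is controlling the transverse eigenvalue $\beta(\theta)$, since a narrow cone makes $M(\ee_\xx)$ close to rank one along $\ee_\xx$ and threatens degeneracy in the directions perpendicular to the cone axis. The explicit formula above shows $\beta(\theta)>0$ for every $\theta>0$, and since $\theta$ is a fixed geometric constant from the interior cone condition, $\beta(\theta)$ is bounded below by a fixed positive number. A smaller technical point is the positivity of $J$ near the origin: if this is not explicitly among the standing assumptions, any open radial interval on which $J$ is positive will do, at the price of a smaller $\gamma$.
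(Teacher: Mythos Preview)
Your argument is correct, but it proceeds differently from the paper. Both proofs begin the same way: the strict strength-domain hypothesis yields a uniform lower bound $g''\ge c_0>0$, hence a pointwise lower bound on $\rho$, and the interior cone condition furnishes a cone $C_{\lambda,\theta}(\xx,\ee_\xx)\subset\Omega$ at every point. From there the paper takes an abstract route: it defines the scalar function
\[
\Phi(\xx,\boldsymbol{\eta})=\int_{\Omega}\frac{\chi(\yy)\chi_\epsilon(|\yy-\xx|)}{\epsilon^{d}\omega_d|\yy-\xx|}\,|\ee_{\yy-\xx}\cdot\boldsymbol{\eta}|^2\,d\yy,
\]
observes that $\Phi(\xx,\boldsymbol{\eta})>0$ for every $(\xx,\boldsymbol{\eta})$ because only a null set of directions in the cone is orthogonal to $\boldsymbol{\eta}$, cites \cite{MengeshaDuNonlocal14} for continuity of $\Phi$ on $\Omega\times S^{d-1}$, and then invokes compactness to pass from pointwise positivity to a uniform minimum $C=\min\Phi>0$. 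You instead restrict to the cone, factor the integral in spherical coordinates, and compute the angular part $M(\ee_\xx)=\alpha(\theta)\,\ee_\xx\otimes\ee_\xx+\beta(\theta)(\I-\ee_\xx\otimes\ee_\xx)$ explicitly via rotational symmetry, so that the lower eigenvalue $\min(\alpha,\beta)$ is manifestly positive and independent of $\xx$. Your approach is more constructive: it produces an explicit $\gamma$ in terms of $\theta,\lambda,\epsilon,c_0,J_0$, avoids the external continuity lemma, and does not rely on compactness of the parameter space (which the paper uses implicitly to take a minimum). The paper's approach is more succinct once the cited continuity result is in hand. Your remark about needing $J$ bounded below on some interval near the origin is well taken; the paper's lower bound on $\rho$ quietly assumes the same thing.
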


\proof
When $\uu$ is strictly contained in the strength domain then one has a $\tilde{\gamma}>0$ for which
\begin{equation}\label{pos}
g''(\sqrt{\yy-\xx}S(\yy,\xx,\uu))>\tilde{\gamma}
\end{equation}
so for this case one has the estimate
\begin{align}\label{eq:rhoestlow}
    \rho(\yy,\xx,S(\yy,\xx,\uu)\geq \frac{\tilde{\gamma}\chi(\yy,\xx) J^\epsilon(|\yy - \xx|)}{\epsilon^{d+1} \omega_d\abs{\yy - \xx}}.
\end{align}

We proceed as in \cite{DuGunLehZho}, \cite{MengeshaDuNonlocal14} and for $\xx\in\Omega$, $\boldsymbol{\eta}\in\mathbb{S}^{d-1}$  we introduce the function $\Phi(\xx,\boldsymbol{\eta})$ defined by
\begin{align*}
    \Phi(\xx,\boldsymbol{\eta}) =\int\limits_{\Omega}^{} \frac{\chi(\yy)\chi_{\epsilon}(|\yy-\xx|)J^\epsilon(|\yy - \xx|)}{\epsilon^{d+1} \omega_d\abs{\yy - \xx}} |\ee_{\yy - \xx}\cdot\boldsymbol{\eta}|^2  d\yy.
\end{align*}
Since the domain $\Omega$ satisfies the interior cone condition (see Definition \ref{thm:exitence-of-perturbation})  we apply the insights of \cite{DuGunLehZho},  \cite{MengeshaDuNonlocal14} to see that for any $\xx,\boldsymbol{\eta}\in\Omega\times\mathbb{S}^{d-1}$
we have at most a zero measure subset in $C_{\lambda,\theta}(\xx,\ee_{\xx})$ which is perpendicular to $\boldsymbol{\eta}$, so
\begin{align*}
    \boldsymbol{\eta}^T\mathbb{A}(\xx)\boldsymbol{\eta}>\Phi(\xx,\boldsymbol{\eta})>0.
\end{align*}
When $\Phi(\xx,\boldsymbol{\eta})$ is continuous on $\xx,\boldsymbol{\eta}\in\Omega\times\mathbb{S}^{d-1}$ we can conclude the existence of a positive constant $C$ for which
\begin{align*}
    C=\min_{\xx,\boldsymbol{\eta}}\{\Phi(\xx,\boldsymbol{\eta})\}.
\end{align*}
Applying the result of \cite{MengeshaDuNonlocal14}  we can easily show that $\Phi(\xx,\boldsymbol{\eta})$ is continuous and from \eqref{eq:rhoestlow} we conclude that
\begin{align*}
    \boldsymbol{\eta}^T\mathbb{A}(\xx)\boldsymbol{\eta}>\gamma|\boldsymbol{\eta}|^2>0,
\end{align*}
where $\gamma=\tilde{\gamma}C$.
\qed

Lemma \ref{thm:symmet}, Lemma \ref{bl2}, equation \eqref{stab50infty}, and Lemma \ref{lemma:positive-definite},  show that the hypotheses of Theorem \ref{esistence of inverse in strength domain} are satisfied if $\uu$ is strictly contained in the strength domain so $D(\mathcal{L})[\uu]$  has a bounded inverse on $\mathcal{V}$.

\subsection{Energy minimization among deformations in the strength domain}
\label{sec:energyminimize}
We establish Theorem \ref{thm:Unique}.
It is shown  that if $\LL(\uu_0)=\bb_0$ and $\uu_0$ is strictly in the strength domain  then $E[\uu] \ge E[\uu_0]$ for fields $\uu$ inside the strength the domain. From construction $g(r)$ is convex for $r^e<r<r^c$ hence if $\uu$ has strain with $\sqrt{|\yy-\xx|}S(\yy,\xx,\uu)\in (r^e,r^c)$, then
\begin{align*}
g(\sqrt{|\yy-\xx|}S(\yy,\xx,\boldsymbol{r}(t)))\leq t g(\sqrt{|\yy-\xx|}S(\yy,\xx,\uu)+(1-t)g(\sqrt{|\yy-\xx|}S(\yy,\xx,\uu_0)), 
\end{align*}
where $0\leq t\leq 1$ and $\boldsymbol{r}(t)=t\uu+(1-t)\uu_0$. So for $E[\uu]]$ defined by \eqref{eq:total_energy} we have
\begin{align*}
    E[\boldsymbol{r}(t)]\leq t E[\uu]+(1-t)E[\uu_0],
\end{align*}
hence
\begin{align*}
   \frac{ E[\boldsymbol{r}(t)]-E[\uu_0]}{t}\leq E[\uu]-E[\uu_0],
\end{align*}
sending $t$ to zero gives
\begin{align*}
  \int_\Omega\,(\LL(\uu_0)-\bb_0)\cdot(\uu-\uu_0)\,d\xx \leq E[\uu]-E[\uu_0],
\end{align*}
and the claim follows noting that $\LL(\uu_0)=\bb_0$.

\section{A necessary condition for an inverse}
\label{sec:necessary}
We prove Theorem \ref{thm:ness}.  Suppose there is a set $\mathcal{F}\subset\Omega$ with nonzero measure for which $Ker\{\mathbb{A}(\xx)\}\not=\{\boldsymbol{0}\}$ on $\mathcal{F}$. So there is a measurable function $\mathbf{a}(\xx)$ taking values in $S^{d-1}$ on $\mathcal{F}$ and zero otherwise, with $\mathbb{A}(\xx)\mathbf{a}(\xx)=0$ on $\Omega$. We let $B(\xx_0,\delta)$ be the ball of radius $\delta$ centered at $\xx_0\in\mathcal{F}$ and form the function 
\begin{equation}
    \label{example}
    \uu^\delta_{\xx_0}(\xx)=\frac{\mathbf{a}(\xx)\chi_{\mathcal{F}}(\xx)\chi_{B(\xx_0,\delta)}(\xx)}{\sqrt{B(\xx_0,\delta)}},
\end{equation}
   where $\chi_{\mathcal{F}}$ and $\chi_{B(\xx_0,\delta)}$ are the indicator functions of the sets $\mathcal{F}$ and $B(\xx_0,\delta)$ respectively.
Choose $\delta$ small enough so that $B(\xx_0,\delta)\subset\Omega$ and for this choice we apply \eqref{rhoest} to get
\begin{align}
    \label{reducedD}
    |D(\LL)[\uu]\uu^\delta_{\xx_0}(\xx)|=&|\mathbb{K}\uu^\delta_{\xx_0}(\xx)|\\
    =&\left\vert\int\limits_{ \Omega}^{} \rho(\yy,\xx,S(\yy,\xx,\uu))\left(\uu^\delta_{\xx_0}(\yy)\cdot \ee_{\yy - \xx}\right)  \ee_{\yy - \xx}  d\yy \right\vert \nonumber\\
    \leq & \int\limits_{ \Omega}^{}\frac{C}{\epsilon^{d+1}\omega_d\abs{\yy - \xx}}\left\vert\left(\uu^\delta_{\xx_0}(\yy)\cdot \ee_{\yy - \xx}\right)  \ee_{\yy - \xx}\right\vert  d\yy\nonumber\\
    \leq & \tilde{C}\delta^{-\frac{d}{2}}\int\limits_{B(\xx_0,\delta)}^{}\frac{1}{\abs{\yy - \xx}}\left\vert\left(\mathbf{a}(\yy)\cdot \ee_{\yy - \xx}\right)  \ee_{\yy - \xx}\right\vert  d\yy\nonumber\\
    &\leq \tilde{C}\delta^{-\frac{d}{2}}\int\limits_{B(\xx_0,\delta)}^{}\frac{1}{\abs{\yy - \xx_0}}| d\yy\leq \tilde{C}\delta^{-\frac{1}{2}}\nonumber,
\end{align}
   here  $\tilde{C}$ denotes a constant independent of $\delta$.
   We arrive at the estimates
   \begin{equation}\label{broken}
   \Vert D(\LL)[\uu]\uu^\delta_{\xx_0}\Vert_\infty\leq\tilde{C}\delta^{\frac{1}{2}},\qquad \Vert\uu^\delta_{\xx_0}\Vert_\infty=\omega_d^{-\frac{1}{2}} \delta^{-\frac{d}{2}}.
   \end{equation}
   We summarize concluding for any sequence of positive numbers $K\rightarrow 0$ there is an appropriate $\vv_K=\uu^{\delta_K}_{\xx_0}/\Vert\uu^{\delta_K}_{\xx_0}\Vert_\infty$ for which $\Vert\vv_K\Vert_\infty=1$ and
   \begin{equation*}
       \Vert D(\LL)[\uu]\vv_K\Vert_\infty\leq K.
   \end{equation*}
   This shows that $D(\LL)[\uu]$ is not invertible on $Ran\{D(\LL)[\uu]\}$.

\section{Energy balance}
\label{energybalance}

In this section the energy balance given by theorem \ref{thm:Energy-load} is established. 
Since $D(\LL)[\uu]^{-1}$ exists for all $\uu\in \overline{B(\uu_0,R)}$  there exists a positive constant $K$ independent of $\uu\in B(\uu_0,R)$ for which
\begin{align}\label{linftyontou-0}
        \Vert D(\mathcal{L})[\uu](\ww)\Vert_\infty\geq K\Vert\ww\Vert_\infty, \forall \ww\in\,\mathcal{V}.
    \end{align}
Next we  show that $\uu_t$ is defined and continuous when the body force $\bb(t)$ is continuously differentiable in $t$. First suppose 
\begin{equation*}
\mathcal{L}\uu(t)=\bb(t) \hbox{ for $0<t<T$},   
\end{equation*}
and set $\Delta\uu(t)=\uu(t+\Delta t)-\uu(t)$. Note that $D(\mathcal{L}[\uu(t)]$ is the Fre\'chet derivative of $\mathcal{L}\uu(t)$ so
\begin{align}
    \frac{\bb(t+\Delta t)-\bb(t)}{\Delta t}=&\frac{\mathcal{L}(\uu(t)+\Delta\uu(t))-\mathcal{L}\uu(t)}{\Delta t}\nonumber\\
    =&D(\mathcal{L})[\uu(t)]\left(\frac{\Delta\uu}{\Delta t}\right) + \boldsymbol{\eta}(t)\boldsymbol{q}(t),\label{deriv}
\end{align}
where $|\boldsymbol{\eta}(t)|\leq C \Vert\frac{\Delta\uu}{\Delta t}\Vert_\infty$ and $|\boldsymbol{q}(t)|\leq C \Vert\Delta\uu\Vert_\infty$.
From \eqref{linftyontou-0}
\begin{equation}\label{unbound}
\left\Vert D(\mathcal{L})[\uu(t)]\left(\frac{\Delta\uu}{\Delta t}\right)\right\Vert_\infty\geq K\left\Vert\frac{\Delta\uu}{\Delta t}\right\Vert_\infty.
\end{equation}
Since $\uu(t)$ is continuous in $t$ we have $\Vert \Delta\uu\Vert_\infty\rightarrow 0$ as $\Delta t\rightarrow 0$,  so if
$\limsup_{\Delta t\rightarrow 0}\boldsymbol{\eta}(t)\boldsymbol{q}(t)$ is non zero then $\limsup_{\Delta t\rightarrow 0}\Vert \frac{\Delta\uu}{\Delta t}\Vert_\infty=\infty$ which from \eqref{unbound} and \eqref{deriv} implies $\bb_t=\infty$, which contradicts the assumed differentiability of $\bb(t)$. Hence $\limsup_{\Delta t\rightarrow 0}\boldsymbol{\eta}(t)\boldsymbol{q}(t)=0$. Inverting gives
\begin{align}\label{limzero}
   D(\mathcal{L})[\uu(t)]^{-1}\left(\frac{\bb(t+\Delta t)-\bb(t)}{\Delta t}\right)
    =\frac{\Delta\uu}{\Delta t} + D(\mathcal{L})[\uu(t)]^{-1}\boldsymbol{\eta}(t)\boldsymbol{q}(t),
\end{align}
and on taking limits noting $D(\mathcal{L}[\uu(t)]^{-1}$ is a bounded operator we get
\begin{align}\label{eq:limzero for partial uu}
   D(\mathcal{L})[\uu(t)]^{-1}\left(\frac{\partial \bb(t)}{\partial t}\right)
    =\lim_{\Delta t\rightarrow0}\frac{\Delta\uu}{\Delta t}:=\frac{\partial \uu(t)}{\partial t},
\end{align}
and $\uu_t$ exists and is continuous in $t$. 

Multiplying \eqref{eq:loadcontrol} by $\uu_t$ and integrating over $D$ gives
\begin{align*}
    \int_\Omega\,\LL[\uu(t)]\uu_t(t)\,d\xx = \frac{d}{dt}PD(\uu(t)) = \int_\Omega\bb(t)\cdot\uu_t(t)\,d\xx,
\end{align*}
and the energy balance Theorem \ref{thm:Energy-load} follows from time integration. 

{If instead we only know that $D(\LL)[\uu_0]^{-1}$ exists we show that we can choose $R$ smaller than or equal to that given in theorem \ref{thm:exitence-of-perturbation} such that $D(\LL)[\uu]^{-1}$ exists as a bounded linear functional on $\mathcal{V}$ for $\uu\in B(\uu_0,R)$. From Theorem \ref{thm:Frechetderiv} we have a $C$ independent of $\uu-\uu_0\in\mathcal{V}$ such that

\begin{align}\label{Lipschits}
    \normiii{D(\LL)[\uu]-D(\LL)[\uu_0]} &<  C||\uu-\uu_0||_\infty
\end{align}
Now from Banach's Lemma if
\begin{align}\label{Inverts}
    \normiii{D(\LL)[\uu]-D(\LL)[\uu_0]} &<  \normiii{D(\LL)[\uu_0]^{-1}}^{-1}
\end{align}
then $D(\LL)[\uu]^{-1}$ exists.  Taking $R$ small enough such that
\begin{equation}\label{Rchoice}
    CR\leq \normiii{D(\LL)[\uu_0]^{-1}}^{-1},
\end{equation}
we see that $D(\LL)[\uu]^{-1}$ exists for all $\uu\in \overline{B(\uu_0,R)}$.  On choosing  $R$ such that both \eqref{Rchoice}  and \eqref {eq:choice-R} hold  we get the conclusion of theorem \ref{thm:Energy-load}.
}

   \section{Conclusions}
\label{sec:conclusion} We have shown existence of quasistatic evolutions for nonlocal models exhibiting damage. The method shows existence of a solution as a rate independent evolution of critical points. It is shown that an evolution exists in the neighborhood of a nontrivial initial displacement field-body load pair $(\uu_0,\bb_0)$. For an appropriate body force $\bb_0$ the pair can be chosen such that $\uu_0$ lies in the strength domain and is a local minimizer of the PD energy.  
An energy balance law is established for the quasistatic load-controlled evolution.
{ More generally Theorem \ref{thm:exitence-of-perturbation} shows that there is a quasistatic evolution about and displacement-load pair $\uu_0,\bb_0\in \mathcal{V}$ for which $\LL[\uu_0] = \bb_0$ and  ${D(\LL)[\uu_0]^{-1}}$ exists and is bounded on on $\mathcal{V}$.}

A displacement controlled quasistatic evolution can also be addressed using methods introduced here. {Much of the theory developed here provides the mathematical foundations for the quasistatic fracture theory developed and implemented in the sequel \cite{BhattacharyaLiptonDiehl}.}

\bibliography{references}

\begin{thebibliography}{10}

\bibitem{adams2003sobolev}
R.~A. Adams and J.~J. Fournier.
\newblock {\em Sobolev spaces}.
\newblock Elsevier, 2003.

\bibitem{alali2012multiscale}
B.~Alali and R.~Lipton.
\newblock Multiscale dynamics of heterogeneous media in the peridynamic
  formulation.
\newblock {\em Journal of Elasticity}, 106(1):71--103, 2012.

\bibitem{anderson2017fracture}
T.~L. Anderson.
\newblock {\em Fracture mechanics: fundamentals and applications}.
\newblock CRC press, 2017.

\bibitem{BhattacharyaLiptonDiehl}
D.~Bhattacharya, R.~P. Lipton, and P.~Diehl.
\newblock Quasistatic fracture evolution.
\newblock {\em arXiv:2212.08753 [math.NA]}, 2022.

\bibitem{bobaru2010peridynamic}
F.~Bobaru and M.~Duangpanya.
\newblock The peridynamic formulation for transient heat conduction.
\newblock {\em International Journal of Heat and Mass Transfer},
  53(19-20):4047--4059, 2010.

\bibitem{DuGunLehZho}
Q.~Du, M.~Gunzburger, R.~B. Lehoucq, and K.~Zhou.
\newblock Analysis of the volume-constrained peridynamic navier equation of
  linear elasticity.
\newblock {\em Journal of Elasticity}, 113:193--217, 2013.

\bibitem{du2016multiscale}
Q.~Du, R.~Lipton, and T.~Mengesha.
\newblock Multiscale analysis of linear evolution equations with applications
  to nonlocal models for heterogeneous media.
\newblock {\em ESAIM: Mathematical Modelling and Numerical Analysis},
  50(5):1425--1455, 2016.

\bibitem{dutian2018}
Q.~Du, Y.~Tao, and X.~Tian.
\newblock A peridynamic model of fracture mechanics with bond-breaking.
\newblock {\em Journal of Elasticity}, 132:197--218, 2017.

\bibitem{emmrichpuhst2016}
E.~Emmrich and D.~Puhst.
\newblock A short note on modelling damage in peridynamics.
\newblock {\em Journal of Elasticity}, 123:245--252, 2016.

\bibitem{Folland}
G.~B. Folland.
\newblock {\em Introduction to Partial Differential Equations}.
\newblock Princeton University Press, 1995.

\bibitem{Freund}
L.~Freund.
\newblock {\em Dynamic Fracture Mechanics}.
\newblock Cambridge University Press, 1989.

\bibitem{jha2018numerical}
P.~K. Jha and R.~Lipton.
\newblock Numerical analysis of nonlocal fracture models in holder space.
\newblock {\em SIAM Journal on Numerical Analysis}, 56(2):906--941, 2018.

\bibitem{jhalipton2020}
P.~K. Jha and R.~Lipton.
\newblock {Kinetic relations and local energy balance for LEFM from a nonlocal
  peridynamic model}.
\newblock {\em International Journal of Fracture}, 226(1):81--95, 2020.

\bibitem{Kilic2009}
B.~Kilic and E.~Madenci.
\newblock Prediction of crack paths in a quenched glass plate by using
  peridynamic theory.
\newblock {\em International Journal of Fracture}, 156(2):165--177, Apr 2009.

\bibitem{kruvzik2018quasistatic}
M.~Kru{\v{z}}{\'\i}k, C.~Mora-Corral, and U.~Stefanelli.
\newblock Quasistatic elastoplasticity via peridynamics: existence and
  localization.
\newblock {\em Continuum Mechanics and Thermodynamics}, 30(5):1155--1184, 2018.

\bibitem{lipton2014horizonlimit}
R.~Lipton.
\newblock Dynamic brittle fracture as a small horizon limit of peridynamics.
\newblock {\em Journal of Elasticity}, 117:21--50, 2014.

\bibitem{lipton2016cohesive}
R.~Lipton.
\newblock Cohesive dynamics and brittle fracture.
\newblock {\em Journal of Elasticity}, 124(2):143--191, 2016.

\bibitem{lipton2018free}
R.~Lipton, E.~Said, and P.~Jha.
\newblock Free damage propagation with memory.
\newblock {\em Journal of Elasticity}, 133(2):129--153, 2018.

\bibitem{liptonjha2021}
R.~P. Lipton and P.~K. Jha.
\newblock Nonlocal elastodynamics and fracture.
\newblock {\em Nonlinear Differential Equations and Applications}, 2021.

\bibitem{lipton2019complex}
R.~P. Lipton, R.~B. Lehoucq, and P.~K. Jha.
\newblock Complex fracture nucleation and evolution with nonlocal
  elastodynamics.
\newblock {\em Journal of Peridynamics and Nonlocal Modeling}, 1(2):122--130,
  2019.

\bibitem{madenci2017ordinary}
E.~Madenci and S.~Oterkus.
\newblock Ordinary state-based peridynamics for thermoviscoelastic deformation.
\newblock {\em Engineering Fracture Mechanics}, 175:31--45, 2017.

\bibitem{MengeshaDu}
T.~Mengesha and Q.~Du.
\newblock Analysis of a scalar nonlocal peridynamic model with a sign changing
  kernel.
\newblock {\em Discrete \& Continuous Dynamical Systems - B}, 18(5):1415--1437,
  2013.

\bibitem{MengeshaDuNonlocal14}
T.~Mengesha and Q.~Du.
\newblock Nonlocal {{Constrained Value Problems}} for a {{Linear Peridynamic
  Navier Equation}}.
\newblock {\em Journal of Elasticity}, 116(1):27--51, June 2014.

\bibitem{mengesha2015variational}
T.~Mengesha and Q.~Du.
\newblock On the variational limit of a class of nonlocal functionals related
  to peridynamics.
\newblock {\em Nonlinearity}, 28(11):3999, 2015.

\bibitem{oterkus2014fully}
S.~Oterkus, E.~Madenci, and A.~Agwai.
\newblock Fully coupled peridynamic thermomechanics.
\newblock {\em Journal of the Mechanics and Physics of Solids}, 64:1--23, 2014.

\bibitem{ravi2004dynamic}
K.~Ravi-Chandar.
\newblock {\em Dynamic fracture}.
\newblock Elsevier, 2004.

\bibitem{scott2020asymptotic}
J.~M. Scott and T.~Mengesha.
\newblock Asymptotic analysis of a coupled system of nonlocal equations with
  oscillatory coefficients.
\newblock {\em Multiscale Modeling \& Simulation}, 18(4):1462--1488, 2020.

\bibitem{SILLING2000175}
S.~Silling.
\newblock Reformulation of elasticity theory for discontinuities and long-range
  forces.
\newblock {\em Journal of the Mechanics and Physics of Solids}, 48(1):175--209,
  2000.

\bibitem{sillingweknerascaribobaru}
S.~Silling, O.~Weckner, E.~Ascari, and F.~Bobaru.
\newblock Crack nucleation in a peridynamic solid.
\newblock {\em Int J Fract.}, 162:219--227, 2010.

\bibitem{silling2005meshfree}
S.~A. Silling and E.~Askari.
\newblock A meshfree method based on the peridynamic model of solid mechanics.
\newblock {\em Computers \& structures}, 83(17-18):1526--1535, 2005.

\bibitem{silling2007peridynamic}
S.~A. Silling, M.~Epton, O.~Weckner, J.~Xu, and E.~Askari.
\newblock Peridynamic states and constitutive modeling.
\newblock {\em Journal of Elasticity}, 88(2):151--184, 2007.

\bibitem{silling2017modeling}
S.~A. Silling, M.~L. Parks, J.~R. Kamm, O.~Weckner, and M.~Rassaian.
\newblock Modeling shockwaves and impact phenomena with eulerian peridynamics.
\newblock {\em International Journal of Impact Engineering}, 107:47--57, 2017.

\bibitem{parksyueyoutrask}
N.~Trask, H.~You, Y.~Yu, and M.~L. Parks.
\newblock An asymptotically compatible meshfree quadrature rule for nonlocal
  problems with applications to peridynamics.
\newblock {\em Computer Methods in Applied Mechanics and Engineering},
  343:151--165, 2007.

\end{thebibliography}
\bibliographystyle{abbrv}

\end{document}